\newtheorem{theorem}{Theorem}[section]
\newtheorem{proposition}[theorem]{Proposition}
\newtheorem{lemma}[theorem]{Lemma}
\newtheorem{corollary}[theorem]{Corollary}
\newtheorem{definition}[theorem]{Definition}
\newtheorem{conjecture}[theorem]{Conjecture}
\newtheorem{example}[theorem]{Example}
\numberwithin{equation}{section}
\begin{document}

\title{Cotangent bundles of toric varieties and coverings of toric hyperk{\"a}hler manifolds}

\author{Craig van Coevering \and Wei Zhang}

\thanks{Mathematics Classification Primary(2000): Primary 53C26, Secondary 53D20, 14L24.\\
\indent The second author is supported by Tian Yuan math Fund. and the Fundamental Research Funds for the Central Universities\\
\indent Keywords: toric variety, toric hyperk{\"a}hler manifold,
cotangent bundle, core}

\begin{abstract}

Toric hyperk{\"a}hler manifolds are quaternion analog of toric
varieties. Bielawski pointed out that they can be glued by cotangent
bundles of toric varieties. Following his idea, viewing both toric
varieties and toric hyperk{\"a}her manifolds as GIT quotients, we
first establish geometrical criteria for the semi-stable points.
Then based on these criteria, we show that the cotangent bundles of
compact toric varieties in the core of toric hyperk{\"a}hler
manifold are sufficient to glue the desired toric hyperk{\"a}hler
manifold.

\end{abstract}

\maketitle

\section{Introduction}

Toric varieties are originally defined by the combinatorial data of
the fans(cf. \cite{Fu93} or \cite{Od88}), also studied by Delzant
and Guillemin (\cite{De88}, \cite{Gu94b}) from symplectic quotient
perspective, which has natural connection with the Geometric
Invariant Theory(cf. \cite{MFK94}). Toric hyperk{\"a}hler manifold
is a quaternion analogue of toric variety which carries
hyperk{\"a}hler metric automatically. In \cite{BD00}, Bielawski and
Dancer studied their basic properties: moment map, core, cohomology,
etc. In \cite{Bi99}, Bielawski pointed out that toric
hyperk{\"a}hler manifold can be viewed as gluing the cotangent
bundles of toric varieties, however, he didn't give the explicit way
of gluing.

Following his instruction, we pursue a ``canonical" set of toric
varieties whose cotangent bundles are enough to glue the toric
hyperk{\"a}hler manifold. For this purpose, we combine the
symplectic and GIT quotients methods. In section 2, we give the
definitions and basic properties of toric varieties and toric
hyperk{\"a}hler manifolds as symplectic quotients.

The essential new ingredient of this paper is in section 3. In
\cite{Ko03}, Konno present the GIT quotient construction of toric
hyperk{\"a}hler manifold and give a numerical criterion for the
semi-stable points. Basing on his method, we derive a similar
criterion for the toric varieties. Then we establish a geometric
interpretations of these two semi-stability criteria, which are the
affine analogs of the state sets in projective case due to Dolgachev
and Hu in \cite{DH98}. Namely, for a toric variety $X(\alpha)$ with
hyperplanes arrangement $\mathcal{A}=\{(H_i,u_i)\}_{i=1}^d$, where
$H_i=\{x \in \mathfrak{n}^*|\langle u_i,x\rangle+\lambda_i=0\}$, we
can set the half space

\begin{equation*}
H^{\geq 0}_i=\{x \in \mathfrak{n}^*|\langle u_i,x \rangle+\lambda_i
\geq 0\},
\end{equation*}
and for every $z \in \mathbb{C}^d$ define
$St_{\mathcal{A}}(z)=\bigcap_{i=1}^d F_z(i)$, where

\begin{equation*}F_z(i)=
\begin{cases}
H_i^{\geq 0} & if z_i \neq 0\\
H_i & if z_i = 0
\end{cases}.
\end{equation*}
Thus viewing toric variety $X(\alpha)$ as a GIT quotient, we have

\begin{proposition}\label{pro:tor:sta}
A point $z \in \mathbb{C}^d$ is $\alpha$-semi-stable if and only if
the set $St_{\mathcal{A}}(z) \subset \mathfrak{n}^*$ is nonempty.
\end{proposition}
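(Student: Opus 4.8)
The plan is to reduce the geometric nonemptiness condition to the numerical semistability criterion derived earlier in this section, by unwinding the definition of $St_{\mathcal{A}}(z)$ through the defining exact sequences of the torus action. Recall from Section~2 that the construction of $X(\alpha)$ rests on an exact sequence $0 \to \mathfrak{g} \xrightarrow{\iota} \mathbb{R}^d \xrightarrow{\pi} \mathfrak{n} \to 0$ with $\pi(e_i) = u_i$, whose dual $0 \to \mathfrak{n}^* \xrightarrow{\pi^*} \mathbb{R}^d \xrightarrow{\iota^*} \mathfrak{g}^* \to 0$ identifies $\mathfrak{n}^*$ with $\operatorname{im}(\pi^*) = \ker(\iota^*)$. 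Under this identification $\langle u_i, x\rangle$ is exactly the $i$-th coordinate of $\pi^*(x)$, and the character $\alpha$ is $\iota^*(\lambda)$ for $\lambda = (\lambda_1, \dots, \lambda_d)$.

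First I would rewrite the condition $St_{\mathcal{A}}(z) \neq \emptyset$. Setting $S = \{i : z_i \neq 0\}$, a point $x \in \mathfrak{n}^*$ lies in $St_{\mathcal{A}}(z)$ precisely when $\langle u_i, x\rangle + \lambda_i \geq 0$ for $i \in S$ and $\langle u_i, x\rangle + \lambda_i = 0$ for $i \notin S$. Introducing $y := \pi^*(x) + \lambda \in \mathbb{R}^d$, as $x$ ranges over $\mathfrak{n}^*$ the vector $y$ ranges over the affine subspace $\lambda + \ker(\iota^*) = \{y : \iota^*(y) = \alpha\}$. Hence $St_{\mathcal{A}}(z) \neq \emptyset$ is equivalent to the solvability of the linear system
\[
\iota^*(y) = \alpha, \qquad y_i \geq 0 \ (i \in S), \qquad y_i = 0 \ (i \notin S).
\]

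Next I would observe that, since $\iota^*(y) = \sum_i y_i\, \iota^*(e_i)$ and the weights of the $G$-action on the coordinates are $a_i := \iota^*(e_i)$, the solvability of this system says exactly that $\alpha$ is a nonnegative combination of $\{a_i : i \in S\}$, i.e. $\alpha \in \operatorname{Cone}(a_i : z_i \neq 0)$. By the numerical criterion for $\alpha$-semistability established above, this cone membership is precisely the condition that $z$ be $\alpha$-semistable, which settles both implications at once. The one point requiring care --- and the main obstacle --- is pinning down the dictionary between the symplectic arrangement data $(u_i, \lambda_i, \mathfrak{n}^*)$ and the GIT weight data $(a_i, \alpha, \mathfrak{g}^*)$, in particular verifying $\alpha = \iota^*(\lambda)$ and the coordinatewise reading of $\langle u_i, x\rangle$; once the exact sequences are dualized with the correct conventions, the remaining equivalences are immediate linear algebra requiring no further estimates.
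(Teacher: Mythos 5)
Your proof is correct and follows essentially the same route as the paper: you identify $\mathfrak{n}^*$ via the dual exact sequence with the affine solution space $\{y:\iota^*(y)=\alpha\}\subset\mathbb{R}^d$ so that the half-spaces $H_i^{\geq 0}$ become the coordinate conditions $y_i\geq 0$, and then invoke the numerical cone criterion of Lemma \ref{lem:sta:num} — which is precisely the content of Proposition \ref{pro:arrcut} and Lemma \ref{lem:sta:alg}. The only cosmetic difference is that the paper deduces the toric case from the hyperk\"ahler version by setting $w=0$, whereas you argue directly; the underlying linear algebra is identical.
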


Similarly, for a toric hyperk{\"a}hler variety $Y(\alpha,
\beta)$(see the detailed definition $St_{\mathcal{A}}(z,w)$ in
section 3), we have
\begin{proposition}\label{pro:hyp:sta}
A point $(z,w) \in \mu_{\mathbb{C}}^{-1}(\beta)$ is
$\alpha$-semi-stable if and only if $St_{\mathcal{A}}(z,w) \subset
\mathfrak{n}^*$ is nonempty.
\end{proposition}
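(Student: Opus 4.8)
The plan is to mirror the two-step structure behind Proposition \ref{pro:tor:sta}: first reduce $\alpha$-semistability of $(z,w)$ to a cone-membership condition via the Hilbert--Mumford numerical criterion, and then identify that condition with the nonemptiness of $St_{\mathcal{A}}(z,w)$ by a theorem of the alternative. Let $K_{\mathbb{C}}$ be the complex torus defining the GIT quotient, with Lie algebra $\mathfrak{k}$, inclusion $\iota\colon\mathfrak{k}\to\mathbb{R}^d$ and projection $\pi\colon\mathbb{R}^d\to\mathfrak{n}$, so that $u_i=\pi(e_i)$ and $a_i:=\iota^*(e_i^*)\in\mathfrak{k}^*$ is the weight of the $i$-th coordinate. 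Writing $P=\{i:z_i\neq 0\}$ and $Q=\{i:w_i\neq 0\}$, Konno's criterion states that $(z,w)$ is $\alpha$-semistable if and only if $\alpha\in\mathrm{Cone}\big(\{a_i:i\in P\}\cup\{-a_i:i\in Q\}\big)$. I would first record that, since $\mu_{\mathbb{C}}^{-1}(\beta)$ is a closed $K_{\mathbb{C}}$-invariant subvariety of $T^*\mathbb{C}^d$, every one-parameter subgroup of $K_{\mathbb{C}}$ preserves it and orbit limits remain inside it; hence the Hilbert--Mumford weights, and therefore the semistable locus, are computed exactly as in the ambient space, and $\beta$ enters only by selecting the subvariety.

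Next I would unwind $St_{\mathcal{A}}(z,w)\neq\emptyset$ into a linear feasibility problem. It asserts the existence of $x\in\mathfrak{n}^*$ satisfying $\langle u_i,x\rangle+\lambda_i\geq 0$ for $i\in P\setminus Q$, $\langle u_i,x\rangle+\lambda_i\leq 0$ for $i\in Q\setminus P$, $\langle u_i,x\rangle+\lambda_i=0$ for $i\notin P\cup Q$, and no condition for the indices in $P\cap Q$ (for which the natural hyperk\"ahler rule assigns all of $\mathfrak{n}^*$). Dualizing the sequence $0\to\mathfrak{n}^*\xrightarrow{\pi^*}(\mathbb{R}^d)^*\xrightarrow{\iota^*}\mathfrak{k}^*\to 0$ and choosing a lift $\lambda\in(\mathbb{R}^d)^*$ with $\iota^*\lambda=\alpha$, I would substitute $v=\pi^*x+\lambda$; then $v_i=\langle u_i,x\rangle+\lambda_i$ and, as $x$ runs over $\mathfrak{n}^*$, $v$ runs over the whole fiber $\{v:\iota^*v=\alpha\}$. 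Thus $St_{\mathcal{A}}(z,w)\neq\emptyset$ is equivalent to the solvability of $\iota^*v=\alpha$ under the sign constraints $v_i\geq 0$ $(i\in P\setminus Q)$, $v_i\leq 0$ $(i\in Q\setminus P)$, $v_i=0$ $(i\notin P\cup Q)$, and $v_i$ free $(i\in P\cap Q)$.

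Since $\iota^*v=\sum_i v_i a_i$, the image under $\iota^*$ of the sign-constrained set of $v$'s is precisely $\mathrm{Cone}\big(\{a_i:i\in P\setminus Q\}\cup\{-a_i:i\in Q\setminus P\}\cup\{\pm a_i:i\in P\cap Q\}\big)$, which collapses to $\mathrm{Cone}\big(\{a_i:i\in P\}\cup\{-a_i:i\in Q\}\big)$. Hence the system is solvable exactly when $\alpha$ lies in this cone, which is Konno's criterion from the first paragraph; this is the same Gale-duality/theorem-of-the-alternative step used for Proposition \ref{pro:tor:sta}. Combining the two steps yields the stated equivalence.

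I expect the only delicate points to be the indices in $P\cap Q$, where both $z_i$ and $w_i$ are nonzero: one must verify that the definition of $St_{\mathcal{A}}(z,w)$ in Section~3 imposes no constraint there, which is exactly what makes such an index contribute both $a_i$ and $-a_i$ to the semistability cone and hence an unconstrained coordinate $v_i$; and one must confirm that semistability on $\mu_{\mathbb{C}}^{-1}(\beta)$ coincides with ambient semistability. Once these compatibility checks are in place, the convex-geometric argument is formally identical to the toric case.
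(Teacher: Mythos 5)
Your proposal is correct and follows essentially the same route as the paper: it reduces semistability to Konno's cone-membership criterion (the paper's Lemma 3.4) and then identifies $\mathfrak{n}^*$ with the affine fiber $\{v:\iota^*v=\alpha\}=\lambda+\ker\iota^*$ (the paper's $\mathfrak{N}_\alpha$ in Proposition \ref{pro:arrcut}), so that nonemptiness of $St_{\mathcal{A}}(z,w)$ becomes exactly the sign-constrained feasibility statement of the paper's Lemma \ref{lem:sta:alg}. Your substitution $v=\pi^*x+\lambda$ and your handling of the indices in $P\cap Q$ match the paper's argument precisely.
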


Above two criteria can be viewed as the dual version about the
hyperplane arrangement in $\mathfrak{n}^*$ of Konno's numerical ones
about the moment map value $\alpha$ in $\mathfrak{m}^*$. The most
important feature of ours is that they will enable us to analyze the
structure of toric hyperk{\"a}hler manifolds in a efficient way.

In section 4, after recalling the basic relationship between toric
varieties and toric hyperk{\"a}hler manifolds, namely the extended
core and the core of toric hyperk{\"a}hler manifold, and cotangent
bundles of toric varieties. Denoting the open sets identical to the
cotangent bundles of compact toric varieties in the core of toric
hyperk{\"a}hler manifold as $U_\epsilon, \epsilon \in \Theta_{cpt}$,
we prove the main theorem,

\begin{theorem}\label{thm:corecover}
Let $Y(\alpha,0)$ be a smooth toric hyperk{\"a}hler manifold with
nonempty core, then the canonical open set $U_\epsilon \cong
T^*X(\mathcal{A}_\epsilon), \epsilon \in \Theta_{cpt}$ is a covering
of $Y(\alpha,0)$, i.e. the cotangent bundles of compact toric
varieties in the core are enough to glue $Y(\alpha,0)$.
\end{theorem}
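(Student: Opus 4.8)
The plan is to prove the covering fiberwise at the level of semistable points, using the two semistability criteria as the bridge between the analytic object $Y(\alpha,0)$ and the combinatorics of the arrangement $\mathcal A$. Since each $U_\epsilon\cong T^*X(\mathcal A_\epsilon)$ is an open subset of $Y(\alpha,0)$, the union $\bigcup_{\epsilon\in\Theta_{cpt}}U_\epsilon$ is open, and it suffices to show it is all of $Y(\alpha,0)$; equivalently, every $\alpha$-semistable $(z,w)\in\mu_{\mathbb C}^{-1}(0)$ lies in $U_\epsilon$ for some $\epsilon\in\Theta_{cpt}$. By Proposition~\ref{pro:hyp:sta} the hypothesis $(z,w)\in Y(\alpha,0)$ is equivalent to $St_{\mathcal A}(z,w)\neq\varnothing$. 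On the other side, I would first record the combinatorial meaning of membership in $U_\epsilon$: under the identification $U_\epsilon\cong T^*X(\mathcal A_\epsilon)$ from Section~4, a point lies in $U_\epsilon$ exactly when its image in the base $X(\mathcal A_\epsilon)$ is defined, i.e. when the base coordinates $\zeta^\epsilon$ (given by $z_i$ where $\epsilon(i)=+$ and by $w_i$ where $\epsilon(i)=-$) are $\alpha$-semistable for the flipped arrangement $\mathcal A_\epsilon$. Applying Proposition~\ref{pro:tor:sta} to $\mathcal A_\epsilon$, this says precisely that $St_{\mathcal A_\epsilon}(\zeta^\epsilon)\neq\varnothing$, where for $\epsilon(i)=-$ the half-space $H_i^{\geq0}$ is replaced by $H_i^{\leq0}:=\{x:\langle u_i,x\rangle+\lambda_i\leq0\}$. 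Thus the theorem is reduced to a statement about hyperplane arrangements: for every vanishing pattern of $(z,w)$ with $St_{\mathcal A}(z,w)\neq\varnothing$ there is a \emph{bounded} chamber $\Delta_\epsilon=\bigcap_i H_i^{\epsilon(i)}$ (that is, $\epsilon\in\Theta_{cpt}$, since $X(\mathcal A_\epsilon)$ is compact iff $\Delta_\epsilon$ is bounded) with $St_{\mathcal A_\epsilon}(\zeta^\epsilon)\neq\varnothing$.

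Next I would unwind how each index contributes. For fixed $\epsilon$, the $i$-th constraint defining $St_{\mathcal A_\epsilon}(\zeta^\epsilon)$ is the half-space $H_i^{\epsilon(i)}$ when $\zeta_i^\epsilon\neq0$ and the hyperplane $H_i$ when $\zeta_i^\epsilon=0$. Two observations organize the search. First, the generic points are free: if all $z_i,w_i\neq0$ then $\zeta_i^\epsilon\neq0$ for every $\epsilon$, so $St_{\mathcal A_\epsilon}(\zeta^\epsilon)=\Delta_\epsilon$, and such a point lies in $U_\epsilon$ for every nonempty chamber $\Delta_\epsilon$; since the core is nonempty there is at least one bounded nonempty $\Delta_\epsilon$, and we are done. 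Second, an index $i$ with $z_i,w_i\neq0$ only adds a half-space constraint (chosen by the side of an auxiliary point $x^*$), which can only help boundedness; hence the essential difficulty is concentrated on the extended core, where $z_iw_i=0$ for all $i$. There each index falls into $A=\{z_i\neq0,w_i=0\}$ (natural sign $+$), $B=\{z_i=0,w_i\neq0\}$ (natural sign $-$), or $D=\{z_i=w_i=0\}$, and $St_{\mathcal A}(z,w)=\bigcap_{A}H_i^{\geq0}\cap\bigcap_{B}H_i^{\leq0}\cap\bigcap_{D}H_i$.

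The heart of the matter is then the following selection lemma, which I expect to be the main obstacle: if the core is nonempty and $St_{\mathcal A}(z,w)\neq\varnothing$, then $St_{\mathcal A}(z,w)$ contains a point $x^*$ lying in the closure of a bounded chamber, at which the forced signs can be completed to some $\epsilon\in\Theta_{cpt}$ with $x^*\in St_{\mathcal A_\epsilon}(\zeta^\epsilon)$. To produce $x^*$ I would use that a nonempty core is equivalent to the normals $\{u_i\}$ positively spanning $\mathfrak n$ (a bounded chamber exists iff the $u_i$ lie in no closed half-space). In the extended-core case this makes the recession cone $R=\{v:\langle u_i,v\rangle\geq0\ (A),\ \leq0\ (B),\ =0\ (D)\}$ pointed, so $St_{\mathcal A}(z,w)$ has a vertex; choosing a generic $\ell\in\mathfrak n$ positive on $R\setminus\{0\}$ and minimizing $\ell$ over $St_{\mathcal A}(z,w)$ selects such a vertex $x^*$, necessarily a vertex of the arrangement $\mathcal A$. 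At $x^*$ the signs are forced (to the side of $x^*$) only on the hyperplanes missing $x^*$, and stay free on the hyperplanes through $x^*$; completing $\epsilon$ then amounts to choosing the free signs so that the recession cone $\{v:\epsilon(i)\langle u_i,v\rangle\geq0\}$ collapses to $\{0\}$ while keeping $x^*\in St_{\mathcal A_\epsilon}(\zeta^\epsilon)$.

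Carrying out this completion is exactly where the nonempty-core hypothesis enters essentially, and it is the step I expect to demand the most care. Via Farkas' lemma, the existence of a valid completion at $x^*$ is equivalent to a positive-spanning property of the oriented normals active at $x^*$; I would deduce this property from the global positive spanning of $\{u_i\}$ together with the optimality of $x^*$ as a minimizer of $\ell$, which controls the active cone at $x^*$ and rules out a nonzero recession direction $v$ with $\langle u_i,v\rangle$ of the forbidden sign for all $i$. This affine, core-sensitive refinement of the Dolgachev--Hu chamber picture alluded to in the introduction is the technical core of the argument. Once the lemma is in place, the vanishing pattern of $(z,w)$ together with $x^*\in St_{\mathcal A_\epsilon}(\zeta^\epsilon)$ yields $(z,w)\in U_\epsilon$ with $\epsilon\in\Theta_{cpt}$, proving that $\{U_\epsilon\}_{\epsilon\in\Theta_{cpt}}$ covers $Y(\alpha,0)$.
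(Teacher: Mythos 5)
Your overall architecture coincides with the paper's: both proofs reduce the covering statement, via Propositions~\ref{pro:tor:sta} and \ref{pro:hyp:sta} and the GIT description of $U_\epsilon$ as the quotient of $(\mathbb{C}^d)^{\alpha\text{-}ss}_\epsilon\times\mathbb{C}^d$, to the purely combinatorial claim that every nonempty state set $St_{\mathcal A}(z,w)=\bigcap_A H_i^{\geq0}\cap\bigcap_B H_i^{\leq0}\cap\bigcap_D H_i$ meets the closure of some bounded chamber $\Delta_\epsilon$ (equivalently, admits a completion of its forced signs to some $\epsilon\in\Theta_{cpt}$ with $St_{\mathcal A_\epsilon}(\zeta^\epsilon)\neq\varnothing$; one checks these are the same condition since $St_{\mathcal A}(z,w)\cap\Delta_\epsilon=St_{\mathcal A_\epsilon}(\zeta^\epsilon)$). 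Up to that point your reduction is correct and is exactly the paper's. You have also correctly located the crux: this combinatorial ``unavoidability'' of the bounded complex is the one step the paper itself does not really prove (it is dispatched with ``every unbounded polytope and its subpolytopes must have intersection with one of them, see Fig.''), so a genuine argument here would be a real contribution.

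However, your proposed proof of that lemma does not work as sketched. First, the asserted equivalence ``nonempty core $\Leftrightarrow$ the $u_i$ lie in no closed half-space'' is false: positive spanning of $\{u_i\}$ characterizes boundedness of the single chamber $\Delta=\bigcap_i H_i^{\geq0}$, not the existence of some bounded $\Delta_\epsilon$. Already for $n=1$, $u_1=u_2=f_1$ with $\lambda_1\neq\lambda_2$ (the $T^*\mathbb{C}P^1$ arrangement of two points with parallel normals), the core is nonempty ($\epsilon=(+,-)$ gives a bounded segment) while all normals lie in a closed half-line. Since your pointedness of the recession cone $R$ is deduced from this equivalence, that deduction collapses. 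Second, even with the correct form of the nonempty-core hypothesis, $R$ involves only the normals indexed by $A\cup B\cup D$ with prescribed signs, and nothing prevents it from containing a line (or from being a non-pointed cone) when those normals fail to span or all lie in a half-space; then $St_{\mathcal A}(z,w)$ has no vertex and the step ``minimize $\ell$ over $St_{\mathcal A}(z,w)$ to select a vertex $x^*$'' is unavailable. Third, the sign-completion step at $x^*$ --- choosing the free $\epsilon(i)$ so that $\{v:\epsilon(i)\langle u_i,v\rangle\geq0\}=\{0\}$ while keeping $x^*\in St_{\mathcal A_\epsilon}(\zeta^\epsilon)$ --- is exactly the content of the lemma and is only gestured at via Farkas, not carried out; it is not a routine consequence of the global spanning property. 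So the proposal reproduces the paper's reduction faithfully but leaves the same essential combinatorial step open, and the specific route offered for closing it contains a false premise.
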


Finally, there are a Corollary asserting that each individual
$T^*X(\mathcal{A}_\epsilon)$ is dense in $Y(\alpha,0)$ and a
Conjecture that its complement is of complex dimension $n$ and
constituted by several toric varieties intersecting together.

\

\noindent \textbf{Acknowledgement:} The authors want to thank Prof.
Bin Xu, Prof. Bailin Song, Dr. Yihuang Shen and Dr. Yalong Shi for
valuable conversations, and special thank goes to Prof. Xiuxiong
Chen for the encouragement.

\section{Preliminary}
\subsection{Toric variety}
We first state the symplectic definition about toric varieties. The
real torus $T^d=\{(\zeta_1, \zeta_2, \cdots, \zeta_d) \in
\mathbb{C}^d, |\zeta_i|=1\}$ acts on $\mathbb{C}^d$ freely. Denote
$M$ the $m$-dimensional connected subtorus of $T^d$ whose Lie
algebra $\mathfrak{m} \subset \mathfrak{t}^d$ is generated by
integer vectors(which we shall always take to be primitive), then we
have the following exact sequence
\begin{equation*}
0\rightarrow \mathfrak{m}\xrightarrow{\iota}
\mathfrak{t}^d\xrightarrow{\pi} \mathfrak{n}\rightarrow 0,
\end{equation*}
\begin{equation*}\label{eq:exact:dual}
0\leftarrow {\mathfrak{m}}^*\xleftarrow{\iota^*}
(\mathfrak{t}^d)^*\xleftarrow{\pi^*} {\mathfrak{n}}^*\leftarrow 0,
\end{equation*}
where $\mathfrak{n}=\mathfrak{t}^d/\mathfrak{m}$ is the Lie algebra
of the $n$-dimensional quotient torus $N=T^d/M$ and $m+n=d$. For
simplicity, we omit the superscript $d$ over $\mathfrak{t}$ from now
on.

Let $\{e_i\}_{i=1}^d$ be the standard basis of $\mathfrak{t}$, then
$\pi(e_i)=u_i$ are also primitive. Denote $\{e^*_i\}_{i=1}^d$ the
dual basis of $\mathfrak{t}^*$ and $\{\theta_i\}_{i=1}^m$ some basis
span $\mathfrak{m}$. The action of $M$ on $\mathbb{C}^d$ admits a
moment map
\begin{equation*}
\mu(z)=\frac{1}{2}\sum_{i=1}^d|z_i|^2\iota^* e^*_i.
\end{equation*}
For any $\alpha \in \mathfrak{m}^*$, the symplectic quotient
$\mu^{-1}(\alpha)/M$ is a toric variety, denoted as $X(\alpha)$,
inheriting Kahler metric from $\mathbb{C}^d$ on it's smooth part(cf.
\cite{Gu94a}). The quotient torus $N$ has a residue circle action on
$X(\alpha)$ and gives rise to a moment map to $\mathfrak{n}^*$,
\begin{equation*}
\bar{\mu}([z])=\frac{1}{2}\sum_{i=1}^d|z_i|^2 e^*_i.
\end{equation*}
The image of this map is a convex polytope $\Delta$ called the
Delzant polytope of $X(\alpha)$(cf. \cite{De88}).

Conversely, any smooth compact toric variety $X$ of complex
dimension $n$, with a Kahler metric invariant under some torus $N$
comes from Delzant's construction. Unfortunately, this polytope does
not recover all the data of the quotient construction, and the worse
is that it does not cooperate well with the toric hyperk{\"a}hler
theory. We use the notion of hyperplanes arrangement with
orientation(cf. \cite{Pr04}) to replace polytope. In detail,
consider a set of rational oriented hyperplanes
$\mathcal{A}=\{(H_i,u_i)\}_{i=1}^d$,
\begin{equation*}
H_i=\{x \in \mathfrak{n}^*|\langle u_i,x\rangle+\lambda_i=0\},
\end{equation*}
where $H_i$ is the hyperplane and $u_i$ is fixed primitive vector in
$\mathfrak{n}_\mathbb{Z}$ specifying the orientation, called the
normal of $H_i$. We define several subspaces related to these
oriented hyperplanes,
\begin{equation*}
H^{\geq 0}_i=\{x \in \mathfrak{n}^*|\langle u_i,x \rangle+\lambda_i
\geq 0\},
\end{equation*}
\begin{equation*}
H^{\leq 0}_i=\{x \in \mathfrak{n}^*|\langle u_i,x \rangle+\lambda_i
\leq 0\}.
\end{equation*}
A ploytope is naturally associated to this arrangement,
\begin{equation*}
\Delta=\bigcap^d_{i=1} H^{\geq 0}_i,
\end{equation*}
which could be empty or unbounded.

Then the arrangement $\mathcal{A}$ will decide a toric variety the
same as $\Delta$ does. Since $u_i$ define a map $\pi: \mathfrak{t}
\rightarrow \mathfrak{n}$, where $\mathrm{Ker} \pi=\mathfrak{m}$,
let $M$ be the Lie group corresponding to $\mathfrak{m}$ and set
$\alpha=\sum \lambda_i \iota^* e_i^*$, then we call
$\mu^{-1}(\alpha)/M$ the toric variety corresponding to
$\mathcal{A}$ and $\lambda=(\lambda_1, \cdots, \lambda_d)$ a lift of
$\alpha$. For fixed normal vectors, the hyperplane arrangements
corresponding to two different lifts of same moment map value
$\alpha$ only differ by a parallel transport, thus produce same
toric variety(cf. \cite{Pr04}). So we can abuse the notations
$X(\alpha)$ and $X(\mathcal{A})$.

\begin{example}[see \cite{BD00} or \cite{Pr04}]\label{ex:2dim:arr}
Let $n=2$, $u_1=f_1$, $u_2=f_2$, $u_3=-f_1-f_2$, $u_4=-f_2$, and
$\lambda_1=\lambda_2=\lambda_3=\lambda_4=1$. The toric variety is
Hirzebruch surface $\mathbb{P}(\mathcal{O}\oplus\mathcal{O})$. See
Figure \ref{fig:2dim}.
\end{example}

\begin{figure}[h]
\centering \subfigure[the fan]{
\label{fig:2dim:fan} 
\includegraphics[width=2.2in]{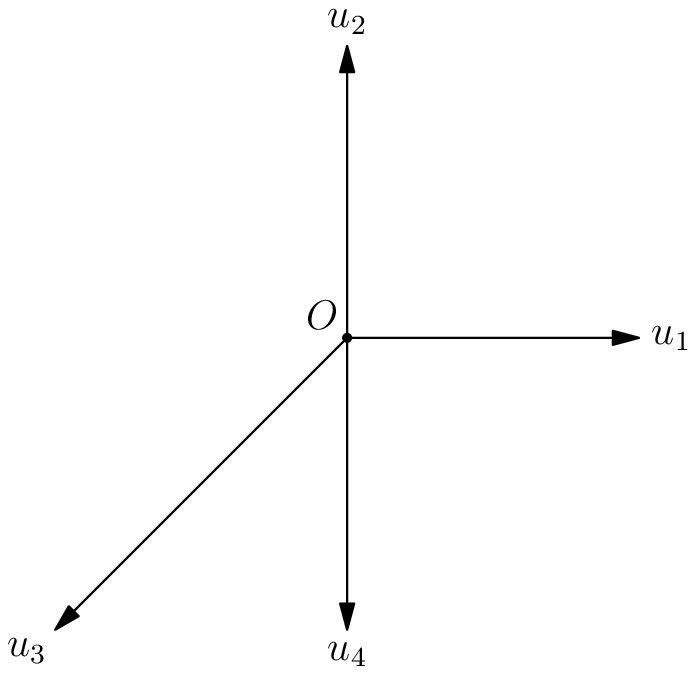}}
\hspace{0.3in} \subfigure[the hyperplane arrangement and polytope]{
\label{fig:2dim:arr} 
\includegraphics[width=2.2in]{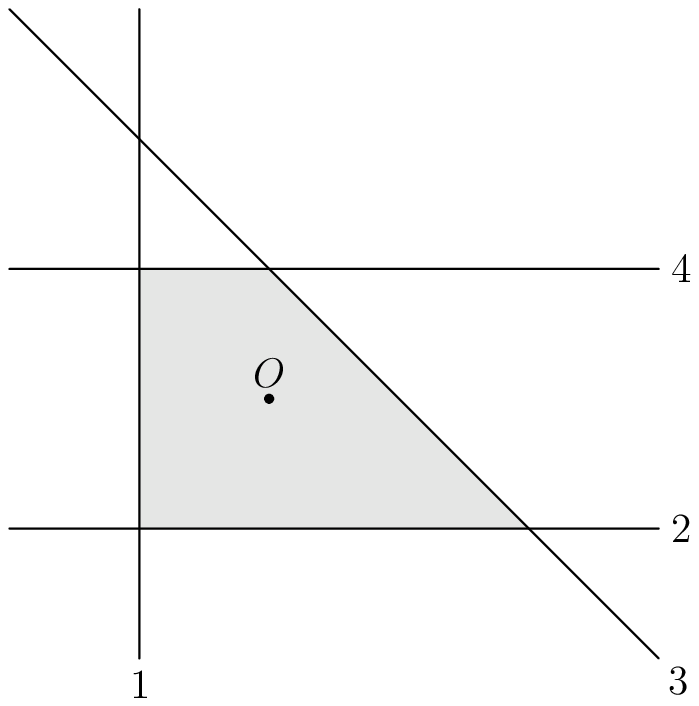}}
\caption{the fan and arrangement in Example \ref{ex:2dim:arr}}
\label{fig:2dim} 
\end{figure}

If the rational vectors $u_i$ is regular, i.e. every collection of
$n$ linearly independent vectors $\{u_{i_1},\cdots, u_{i_n}\}$ span
$\mathfrak{n}_\mathbb{Z}$ as a $\mathbb{Z}$-basis, then
$\mathcal{A}$ is called regular. The arrangement $\mathcal{A}$ is
called simple if every subset of $k$ hyperplanes with nonempty
intersection intersects in codimension $k$. Then $\mathcal{A}$ is
smooth if it is both regular and simple. It is not difficult to see
that $X(\mathcal{A})$ is smooth if and only if $\mathcal{A}$ is
smooth.

From now on, $\mathcal{A}$ is always assumed to be smooth. If we
denote the regular value set of moment map $\mu$ as
$\mathfrak{m}_{reg}^*$, it is easy to check this condition is
equivalent to $\{u_i\}$ is regular and $\alpha \in
\mathfrak{m}_{reg}^*$. Moreover, denote $\Theta$ the set of maps
form $\{1, \dots, d\}$ to $\{-1,1\}$. For $\epsilon \in \Theta$, let
$\mathcal{A}_\epsilon$ be the arrangement changing the normal of
$H_i$ if $\epsilon(i)=-1$, and when $\epsilon(i)=1$ for all $i$, we
abbreviate the subscript $\epsilon$ for simplicity. Notice that the
toric variety $X(\mathcal{A}_\epsilon)$ for various $\epsilon$ could
be totally different.

\subsection{Toric hyperk{\"a}hler manifold}

A $4n$-dimensional manifold is hyperk{\"a}hler if it possesses a
Riemannian metric $g$ which is Kahler with respect to three complex
structures $I_1$; $I_2$; $I_3$ satisfying the quaternionic relations
$I_1 I_2 = -I_2 I_1 = I_3$ etc. To date the most powerful technique
for constructing such manifolds is the hyperk{\"a}hler quotient
method of Hitchin, Karlhede, Lindstrom and Rocek(\cite{HKLR87}). We
specialized on the class of hyperk{\"a}hler quotients of flat
quaternionic space $\mathbb{H}^d$ by subtori of $T^d$. The geometry
of these spaces turns out to be closely connected with the theory of
toric varieties.

Since $\mathbb{H}^d$ can be identified with $T^*\mathbb{C}^d \cong
\mathbb{C}^d\times\mathbb{C}^d$, it has three complex structures
$\{I_1, I_2, I_3\}$. The real torus $T^d=\{(\zeta_1, \zeta_2,
\cdots, \zeta_d) \in \mathbb{C}^d, |\zeta_i|=1\}$ acts on
$\mathbb{C}^d$ induce a action on $T^*\mathbb{C}^d$ keeping the
hyperk{\"a}hler structure,
\begin{equation*}
(z,w)\zeta=(z \zeta, w \zeta^{-1}).
\end{equation*}
The subtours $M$ acts on it admitting a hyperk{\"a}hler moment map
$\mu=(\mu_\mathbb{R}, \mu_\mathbb{C}): \mathbb{H}^d \rightarrow
\mathfrak{m}^* \times \mathfrak{m}^*_\mathbb{C}$, given by
\begin{equation*}\label{eq:moment:r}
\mu_\mathbb{R}(z,w) =
\frac{1}{2}\sum_{i=1}^d(|z_i|^2-|w_i|^2)\iota^* e^*_i,
\end{equation*}
\begin{equation*}\label{eq:moment:c}
\mu_\mathbb{C}(z,w) = \sum_{i=1}^d z_i w_i \iota^* e^*_i,
\end{equation*}
where the complex moment map $\mu_\mathbb{C} : \mathbb{H}^d
\rightarrow \mathfrak{m}^*_\mathbb{C}$ is holomorphic with respect
to $I_1$. Bielawski and Dancer introduced the definition of toric
hyperk{\"a}hler varieties, and generally speaking, they are not
toric varieties.

\begin{definition}[\cite{BD00}]
A toric hyperk{\"a}hler variety $Y(\alpha, \beta)$ is a
hyperk{\"a}hler quotient $\mu^{-1}(\alpha, \beta)/M$ for $(\alpha,
\beta) \in \mathfrak{m}^* \times \mathfrak{m}^*_\mathbb{C}$.
\end{definition}

A smooth part of $Y(\alpha,\beta)$ is a $4n$-dimensional
hyperk{\"a}hler manifold, whose hyperk{\"a}hler structure is denoted
by $(g, I_1, I_2, I_3)$. The quotient torus $N=T/M$ acts on
$Y(\alpha,\beta)$, preserving its hyperk{\"a}hler structure. This
residue circle action admits a hyperk{\"a}hler moment map
$\bar{\mu}=(\bar{\mu}_{\mathbb{R}}, \bar{\mu}_{\mathbb{C}})$,
\begin{equation*}
\bar{\mu}_\mathbb{R}([z,w]) =
\frac{1}{2}\sum_{i=1}^d(|z_i|^2-|w_i|^2) e^*_i,
\end{equation*}
\begin{equation*}
\bar{\mu}_\mathbb{C}([z,w]) = \sum_{i=1}^d z_i w_i  e^*_i.
\end{equation*}
Differs from the toric case, the map $\bar{\mu}$ to $\mathfrak{n}^*
\times \mathfrak{n}^*_\mathbb{C}$ is surjective, never with a
bounded image.

For the purpose of this article, we always assume that $Y(\alpha,
\beta)$ is a smooth manifold, i.e. $\{u_i\}$ is regular and
$(\alpha, \beta)$ is regular value of the moment map $\mu$(cf.
\cite{BD00} and \cite{Ko08}). Parallel with previous subsection, we
use hyperplane arrangement encoding the quotient construction. For
the moment map takes value in $\mathfrak{m}^* \times
\mathfrak{m}^*_\mathbb{C}$, the lift of $(\alpha,\beta)$ is
$\Lambda=(\lambda^1, \lambda^2, \lambda^3)$, s.t.
\begin{equation*}\label{eq:hyp:lif}
\begin{cases}
&\alpha=\sum \lambda_i^1 \iota^* e_i^*\\
&\beta=\sum (\lambda_i^2+\sqrt{-1}\lambda_i^3 )\iota^* e_i^*
\end{cases}
\end{equation*}
Then we can construct the arrangement of codimension 3 flats (affine
subspaces) in $\mathbb{R}^{3n}$,
\begin{equation*}
H_i=H_i^1 \times H_i^2 \times H_i^3,
\end{equation*}
where
\begin{equation}\label{eq:hyp:arr}
H_i^h=\{x \in \mathfrak{m}^*|\langle u_i,x \rangle+\lambda_i^h=0\},
\ (h=1,2,3, \ i=1,\dots,d)
\end{equation}
a prior with orientation $u_i$. For simplicity, we still denote it
as $\mathcal{A}$. Vice versa, such a arrangement of 3-flats
$\mathcal{A}$ determines a hyperk{\"a}hler quotient $Y(\alpha,
\beta)$.

We now investigate how the hyperk{\"a}hler quotient $Y(\alpha,
\beta)$ changes when the orientations are reversed. The original
subtorus $M$ is defined by the embedding $\iota \theta_k=a_k^i e_i$
and $(\alpha, \beta)$ has a lift $\Lambda$. Reversing the
orientation by letting $\tilde{u}_j=\epsilon(j) u_j$, we get
arrangement $\mathcal{A}_\epsilon$. The new subtorus $\tilde{M}$ is
defined by the embedding $\tilde{\iota} \theta_k=\tilde{a}_k^i e_i$
where $\tilde{a}_k^j=\epsilon(j)a_k^j$ for $k=1,\dots,m$, or
equivalent saying $\tilde{\iota}^* e_j^*=\epsilon(j)\iota^* e_j^*$.
This has a few consequences. One is that the new subtorus
$\tilde{M}$ acts on $\mathbb{H}^d$ is the same as the original
subtorus $M$ does on $\mathbb{H}^d$ by exchanging the positions of
$z_j$ and $w_j$ if $\epsilon(j)=-1$. The second is about the new
lift $\tilde{\Lambda}$. By Equation (\ref{eq:hyp:arr}), we know that
$\tilde{\Lambda}_j=\epsilon(j)\Lambda_j$, thus $\tilde{\alpha}=\sum
\tilde{\lambda}_i^1 \tilde{\iota}^* e_i^*=\sum \lambda_i^1 \iota^*
e_i^*=\alpha$, similarly $\tilde{\beta}=\beta$. Finally, the level
set becomes
\begin{equation*}
\frac{1}{2}\sum_{\epsilon(i)=1} (|z_i|^2-|w_i|^2)\iota^* e^*_i
+\frac{1}{2}\sum_{\epsilon(j)=-1} (|w_j|^2-|-z_j|^2)\iota^*
e^*_j=\alpha,
\end{equation*}

\begin{equation*}
\sum_{\epsilon(i)=1} z_i w_i \iota^* e^*_i+ \sum_{\epsilon(j)=-1}
w_j (-z_j) \iota^*e^*_j=\beta.
\end{equation*}
Mapping $(z_j,w_j)$ to $(w_j,-z_j)$, the level set is identical to
the original one, so is the hyperk{\"a}hler quotient. This means
that toric hyperk{\"a}hler manifolds according to the same
arrangement with different orientations will be biholomorphic to
each other, which is a significant difference from the toric
variety(see a similar discussion in \cite{Ko02}, Lemma 4.2). Even
though, to fix the position of variables, we still presume every
arrangement is with given normal.

\begin{example}[see \cite{BD00}]\label{ex:1dim:arr}
Let $\beta=0$ and take $\alpha$ defined by the arrangement
$u_1=-f_1$, $u_2=u_3=f_1$ in $\mathfrak{n}^1$ where $f_1$ is the
standard basis, and $\lambda_1=1$, $\lambda_2=-\frac{1}{2}$,
$\lambda_3=0$. The resulted toric hyperk{\"a}hler manifold
$Y(\alpha,0)$ is the desingulariztion of
$\mathbb{C}^2/\mathbb{Z}_3$(cf. \cite{HS02}, section 10).

\begin{figure}[h]
\centering \subfigure[the fan, where $u_2$, $u_3$ superposition]{
\label{fig:1dim:fan} 
\includegraphics[width=2.2in]{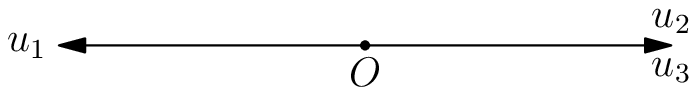}}
\hspace{0.3in} \subfigure[the hyperplane arrangement ]{
\label{fig:1dim:arr} 
\includegraphics[width=2.2in]{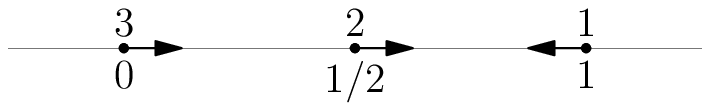}}
\caption{the fan and hyperplanes arrangement in Example
\ref{ex:1dim:arr}}
\label{fig:1dim} 
\end{figure}

\end{example}

\section{GIT description and criteria for semi-stability}

\subsection{toric varieties} Let us consider the GIT
quotient of $\mathbb{C}^d$ by $M_\mathbb{C}$ with respect to the
linearization induced by $\alpha \in \mathfrak{m}^*_\mathbb{Z}$.
More explicitly, the element $\alpha$ induces the character $
\chi_\alpha: M_\mathbb{C} \rightarrow \mathbb{C}^\times$, where
$M_\mathbb{C}$ is the complexfication of $M$. Let $L^{\otimes m} =
\mathbb{C}^d \times \mathbb{C}$ be the trivial holomorphic line
bundle on which $M_\mathbb{C}$ acts by
\begin{equation*}
((z), v)_m \zeta = ((z) \zeta, v \chi_\alpha(\zeta)^m)_m.
\end{equation*}
A point $(z)$ is $\alpha$-semi-stable if and only if there exists
$m\in \mathbb{Z}_{>0}$ and a polynomial $f(p)$, where $p \in
\mathbb{C}^d$, such that $f((p)\zeta)= f(p)\chi_\alpha(\zeta)^m$ for
any $\zeta \in M_\mathbb{C}$ and $f(z) \neq 0$. We denote the set of
$\alpha$-semi-stable points in $\mathbb{C}^d$ by
$(\mathbb{C}^d)^{\alpha-ss}$, then there is a categorical quotient
$\phi: (\mathbb{C}^d)^{\alpha-ss} \rightarrow
(\mathbb{C}^d)^{\alpha-ss}//M_{\mathbb{C}}$, where
$(\mathbb{C}^d)^{\alpha-ss}//M_{\mathbb{C}}$ is the GIT quotient of
$\mathbb{C}^d$ by $M_\mathbb{C}$ respect to $\alpha$, more
precisely, the union of closed $M_{\mathbb{C}}$-orbits in
$(\mathbb{C}^d)^{\alpha-ss}$(cf. \cite{MFK94}, and the readers are
highly recommended to consult the lecture notes \cite{Do03} or
\cite{Th06} if they prefer the variety rather than the abstract
scheme setup). Sometimes $\mathbb{C}^d//_\alpha M_{\mathbb{C}}$
stands for this GIT quotient.

Unfortunately, the definition of stability respect to linearization
is only effective when $\alpha \in \mathfrak{m}^*_\mathbb{Z}$, i.e.
only corresponds to the algebraic toric variety with line bundle
described by Newton ploytope with integer vertices. Following
Konno's method in hyperk{\"a}hler case, it can be generalized to any
complex manifold.

\begin{lemma}\label{lem:sta:num}
Suppose that $\alpha \in \mathfrak{m}^*$,

\noindent (1) A point $z \in \mathbb{C}^d$ is $\alpha$-semi-stable
if and only if
\begin{equation}\label{eq:sta:num}
\alpha \in \sum^d_{i=1} \mathbb{R}_{\geq 0}|z_i|^2 \iota^* e_i^*.
\end{equation}

\noindent (2) Suppose $z \in (\mathbb{C}^d)^{\alpha-ss}$. Then the
$M_\mathbb{C}$-orbit through $z$ is closed in
$(\mathbb{C}^d)^{\alpha-ss}$ if and only if
\begin{equation*}\label{eq:clo:num}
\alpha \in \sum^d_{i=1} \mathbb{R}_{> 0}|z_i|^2 \iota^* e_i^*.
\end{equation*}
\end{lemma}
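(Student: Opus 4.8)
The plan is to reduce the analytic semi-stability condition to the numerical one by exploiting the description of semi-stable points via one-parameter subgroups, in the spirit of the Hilbert--Mumford criterion, and then to dualize it into the moment-map formulation stated in the lemma. First I would make the observation that the action of $M_{\mathbb{C}}$ on $\mathbb{C}^d$ is diagonal: each one-parameter subgroup of $M_{\mathbb{C}}$ is determined by an element $\xi \in \mathfrak{m}_{\mathbb{Z}}$, and it acts on the coordinate $z_i$ with weight $\langle \iota^* e_i^*, \xi \rangle$. Writing $I(z) = \{i : z_i \neq 0\}$ for the support of $z$, the limit of $z$ under the one-parameter subgroup $\lambda_\xi(t)$ as $t \to 0$ exists precisely when $\langle \iota^* e_i^*, \xi \rangle \geq 0$ for all $i \in I(z)$. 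For integral $\alpha$ the Hilbert--Mumford criterion then says that $z$ is $\alpha$-semi-stable if and only if every such destabilizing $\xi$ pairs non-negatively with $\alpha$, i.e. $\langle \alpha, \xi \rangle \geq 0$ whenever $\langle \iota^* e_i^*, \xi \rangle \geq 0$ for all $i \in I(z)$.

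The heart of the argument is to recognize this implication as a statement of convex-cone duality. Consider the closed convex cone $C_z = \sum_{i \in I(z)} \mathbb{R}_{\geq 0}\, \iota^* e_i^* \subset \mathfrak{m}^*$ and its dual cone $C_z^\vee = \{\xi \in \mathfrak{m} : \langle \iota^* e_i^*, \xi\rangle \geq 0 \text{ for all } i \in I(z)\}$. The condition just derived is exactly that $\alpha \in (C_z^\vee)^\vee$, and since $C_z$ is a finitely generated (hence closed) convex cone, the bidual theorem gives $(C_z^\vee)^\vee = C_z$. Thus semi-stability is equivalent to $\alpha \in C_z = \sum_{i \in I(z)} \mathbb{R}_{\geq 0}\, \iota^* e_i^*$, which is precisely the condition (\ref{eq:sta:num}) once one notes that the coefficients $|z_i|^2$ are positive exactly on $I(z)$ and so contribute the same rays. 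For part (2), the same one-parameter-subgroup analysis characterizes closed orbits: the orbit of $z$ fails to be closed iff some $\xi$ with $\langle \iota^* e_i^*, \xi\rangle \geq 0$ on $I(z)$ drives a nontrivial degeneration with $\langle \alpha, \xi\rangle = 0$, which translates into $\alpha$ lying on a proper face of $C_z$; excluding this is exactly the requirement that $\alpha$ lie in the relative interior, giving the open-cone condition $\alpha \in \sum_{i=1}^d \mathbb{R}_{>0}\,|z_i|^2\, \iota^* e_i^*$.

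The main obstacle, and the step requiring the most care, is the extension from integral $\alpha \in \mathfrak{m}^*_{\mathbb{Z}}$ — where the GIT definition via characters literally applies — to arbitrary real $\alpha \in \mathfrak{m}^*$. Here I would follow Konno's method as indicated in the text: the numerical cone condition (\ref{eq:sta:num}) makes sense for any real $\alpha$ and is continuous and homogeneous in $\alpha$, so one takes it as the definition of semi-stability in the non-integral case, checking that it is consistent with (and reduces to) the character-theoretic definition on the integral lattice. One must verify that this extension still yields a well-behaved quotient and that rational approximation does not change the support combinatorics $I(z)$ that controls the cone $C_z$; since the relevant cones depend only on the \emph{support} of $z$ and not on the precise value of $\alpha$, the combinatorial structure is locally constant in $\alpha$ and the extension is unproblematic once the duality computation above is in place.
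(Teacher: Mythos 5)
Your proof is correct but proceeds along a genuinely different route from the paper's. The paper argues directly from the definition of semi-stability via equivariant sections: since $M_{\mathbb{C}}$ acts diagonally, any polynomial $f$ with $f((p)\zeta)=f(p)\chi_\alpha(\zeta)^m$ and $f(z)\neq 0$ can be replaced by one of its monomials $f_0(p)=\prod_i p_i^{a_i}$ having the same equivariance and $f_0(z)\neq 0$; the two conditions then translate immediately into $a_i=0$ whenever $z_i=0$ and $m\alpha=\sum_i a_i\,\iota^*e_i^*$, which is exactly the cone condition (\ref{eq:sta:num}) (for part (2) the paper simply refers to \cite{Ko08}). You instead invoke the affine Hilbert--Mumford criterion for a character-twisted linearization (King's numerical criterion), obtain the dual description $\alpha\in (C_z^\vee)^\vee$, and then apply the bidual theorem for finitely generated cones; the two arguments are essentially Gordan--Farkas dual to one another. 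The paper's monomial argument is more elementary and self-contained --- it needs nothing beyond the definition, and it makes the converse direction explicit (a rational point of the cone clears denominators to an equivariant monomial) --- whereas your route imports the affine Hilbert--Mumford criterion as a black box, which is itself a nontrivial theorem and requires care with the sign convention relating $\chi_\alpha$ to the one-parameter subgroup. In exchange, your formulation makes part (2) transparent (closedness of the orbit becomes the statement that $\alpha$ lies in the relative interior of $C_z$, the destabilizing subgroup living on a proper face), and it generalizes more readily beyond the abelian case. Both treatments handle real $\alpha$ identically, by taking the cone condition as the definition of semi-stability off the integral lattice and checking compatibility on $\mathfrak{m}^*_{\mathbb{Z}}$.
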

\begin{proof}
For convenience we give the proof of (1), and readers can find the
essential proof of (2) in \cite{Ko08}. Suppose $(z) \in
(\mathbb{C}^d)^{\alpha-ss}$. Then there exists $m \in
\mathbb{Z}_{>0}$ and a polynomial $f(p_1, \dots, p_d)$ such that
$f((p)\zeta) = f(p) \chi_\alpha(\zeta)^m$ and $f(z)\neq 0$. So we
can select out a monomial $f_0(p) = \prod_{i=1}^d p_i^{a_i}$, where
$a_i \in \mathbb{Z}_{>0}$, such that
$f_0((p)\zeta)=f_0(p)\chi_\alpha(\zeta)^m$ and $f_0(z)\neq0$. The
second condition implies that $a_i = 0 \ \text{if} \ z_i = 0$.
Moreover, the first condition implies $m \alpha = \sum_{i=1}^d a_i
\iota^* e_i^*$. To see this, let $\theta_k$ be the standard basis of
$\mathfrak{m}$ and $\rho \in \mathbb{C}^\times$, we have
$\chi(\mathrm{Exp}(\rho \theta_k))=e^{\rho\langle \alpha, \theta_k
\rangle}$ and $(p)\mathrm{Exp}(\rho \theta_k)=(p_i
e^{\rho\langle\iota\theta_k,e_i^*\rangle})=(p_i e^{\rho\langle
\iota^* e_i^*, \theta_k\rangle})$. Thus we proved Equation
(\ref{eq:sta:num}).
\end{proof}

This definition of stability coincides the original GIT one when
$\alpha \in \mathfrak{m}^*_\mathbb{Z}$. Followed by

\begin{proposition}

\noindent (1)If we fix $\alpha \in \mathfrak{m}^*$, then the natural
map $\sigma: X(\alpha) \rightarrow
(\mathbb{C}^d)^{\alpha-ss}//M_{\mathbb{C}}$ is a homeomorphism.

\noindent (2)If $\alpha \in \mathfrak{m}_{reg}^*$, then every
$M_\mathbb{C}$-orbit is closed in $(\mathbb{C}^d)^{\alpha-ss}$. So
the categorical quotient
$(\mathbb{C}^d)^{\alpha-ss}//M_{\mathbb{C}}$ is a geometric quotient
$(\mathbb{C}^d)^{\alpha-ss}/M_{\mathbb{C}}$.
\end{proposition}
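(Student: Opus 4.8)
The plan is to recognize this as the toric incarnation of the Kempf--Ness theorem and to run everything through the convex ``log-norm'' function whose gradient is the shifted moment map. Concretely, Lemma \ref{lem:sta:num}(1) already shows $\mu^{-1}(\alpha)\subseteq(\mathbb{C}^d)^{\alpha-ss}$, since $\mu(z)=\frac12\sum_i|z_i|^2\iota^* e_i^*$ manifestly lies in $\sum_i\mathbb{R}_{\geq 0}|z_i|^2\iota^* e_i^*$. Composing the inclusion with the categorical quotient $\phi$ gives an $M_\mathbb{C}$-invariant (a fortiori $M$-invariant) map $\mu^{-1}(\alpha)\to(\mathbb{C}^d)^{\alpha-ss}//M_\mathbb{C}$, which descends to the continuous map $\sigma$ on $X(\alpha)=\mu^{-1}(\alpha)/M$. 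For each $z$ I would introduce $\psi_z:\mathfrak{m}\to\mathbb{R}$, $\psi_z(\xi)=\frac14\sum_i|z_i|^2 e^{2\langle\iota\xi,e_i^*\rangle}-\langle\alpha,\xi\rangle$; a direct computation gives $\nabla\psi_z(\xi)=\mu(z\exp\xi)-\alpha$ and a Hessian $\sum_i|z_i|^2 e^{2\langle\iota\xi,e_i^*\rangle}\,(\iota^* e_i^*)\otimes(\iota^* e_i^*)$, which is convex throughout and strictly convex in the directions transverse to the annihilator of $\{\iota^* e_i^*: z_i\neq 0\}$, i.e. to the stabilizer subalgebra.

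For part (1) I would prove $\sigma$ is a bijection. Every point of the GIT quotient is a unique closed $M_\mathbb{C}$-orbit in the semistable locus; by Lemma \ref{lem:sta:num}(2) such an orbit satisfies $\alpha\in\sum_i\mathbb{R}_{>0}|z_i|^2\iota^* e_i^*$, which is exactly the condition making $\psi_z$ proper, hence attain its minimum at some $\xi$ where $\mu(z\exp\xi)=\alpha$; thus the orbit meets $\mu^{-1}(\alpha)$ and $\sigma$ is surjective. Injectivity follows from strict convexity modulo the stabilizer: writing an $M_\mathbb{C}$-relation between $z,z'\in\mu^{-1}(\alpha)$ as $z'=z\zeta$ with $\zeta=\eta\exp(\xi)$, $\eta\in M$, $\xi\in\mathfrak{m}$, the points $0$ and $\xi$ are both minimizers of the convex $\psi_z$, so $\xi$ lies in the kernel of the Hessian, i.e. in the stabilizer, giving $z'\in M\cdot z$. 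To upgrade the continuous bijection $\sigma$ to a homeomorphism I would exhibit the continuous $M$-equivariant retraction $(\mathbb{C}^d)^{\alpha-ss}\to\mu^{-1}(\alpha)$ sending each semistable $z$ to the unique $\mu$-zero in its closed orbit (well defined by the same minimization), and check it descends to $\sigma^{-1}$.

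For part (2) I would argue by contradiction. Suppose some semistable $z$ has a non-closed orbit; then its closure contains a closed orbit $M_\mathbb{C}z_0$ with $\dim\mathrm{Stab}_{M_\mathbb{C}}(z_0)>\dim\mathrm{Stab}_{M_\mathbb{C}}(z)\geq 0$. By part (1) we may take $z_0\in\mu^{-1}(\alpha)$. In the torus setting the stabilizer subalgebra of $z_0$ is the annihilator of $\{\iota^* e_i^*: z_{0,i}\neq 0\}$, so a positive-dimensional stabilizer is equivalent to these vectors failing to span $\mathfrak{m}^*$, i.e. to $d\mu_{z_0}$ not being surjective; hence $z_0$ is a critical point with $\mu(z_0)=\alpha$, contradicting $\alpha\in\mathfrak{m}^*_{reg}$. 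Therefore every orbit in the semistable locus is closed, and a categorical quotient all of whose fibers are single closed orbits is automatically a geometric quotient.

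The main obstacle I anticipate is not the algebra but the topology: making the minimization of $\psi_z$ rigorous, namely the precise equivalences ``$\psi_z$ bounded below $\Leftrightarrow$ semistable'' and ``$\psi_z$ proper $\Leftrightarrow$ closed orbit'' (this is where Lemma \ref{lem:sta:num} does the real work), and then proving continuity of the retraction so that $\sigma^{-1}$ is genuinely continuous and not merely a set-theoretic inverse. The stabilizer/critical-point equivalence used in part (2) must also be checked carefully, but in the abelian toric case it reduces to the linear-algebra statement about the spanning of the $\iota^* e_i^*$.
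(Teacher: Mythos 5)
The paper gives no proof of this proposition at all: immediately after the statement it refers the reader to \cite{Ko08}, remarking that the algebraic case goes back to \cite{KN78} and \cite{MFK94} and the holomorphic version to \cite{Na99}. What you have written is a correct, self-contained reconstruction of the Kempf--Ness argument that those references run, made completely explicit in the abelian setting: the functional $\psi_z$ does have gradient $\mu(z\exp\xi)-\alpha$ and the stated Hessian, whose kernel is the annihilator of $\{\iota^*e_i^* : z_i\neq 0\}$, i.e.\ the stabilizer subalgebra; the open-cone condition of Lemma~\ref{lem:sta:num}(2) is exactly properness of $\psi_z$ transverse to that kernel, giving surjectivity; convexity gives injectivity; and in part (2) the chain ``non-closed orbit $\Rightarrow$ closed orbit in the closure with strictly larger stabilizer $\Rightarrow$ $\{\iota^*e_i^*: z_{0,i}\neq 0\}$ fails to span $\mathfrak{m}^*$ $\Rightarrow$ $d\mu_{z_0}$ not surjective'' correctly contradicts $\alpha\in\mathfrak{m}^*_{reg}$. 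The one genuine gap is the one you flag yourself: bijectivity plus continuity of $\sigma$ does not yield a homeomorphism, and the continuity of the Kempf--Ness retraction (equivalently, the agreement of the analytic topology on the affine GIT quotient with the quotient topology from the semistable locus) is a real theorem of Neeman/Schwarz type, not a formality --- in the toric case it can be checked by hand from the explicit minimizer of $\psi_z$, but that computation must actually be done, since the minimizing $\xi$ is only unique modulo a stabilizer that jumps between strata. So your outline is sound and is essentially the proof the paper outsources; to count as complete it needs that last topological step filled in.
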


Readers could consult \cite{Ko08} for the proof. Thus we can
identify the symplectic quotient $X(\alpha)$ with the GIT quotient
$(\mathbb{C}^d)^{\alpha-ss}//M_{\mathbb{C}}$ in both algebraic and
holomorphic case. This principle was established in \cite{KN78},
\cite{MFK94} in the algebraic case. The general holomorphic version
is proved in \cite{Na99}.

In practice, a geometric interpretation of the stability is needed.
Fix $\alpha$, it is equivalent to give an arrangement $\mathcal{A}$.
For any $z \in \mathbb{C}^d$, we associate a subregion of
$\mathfrak{m}_+^*$. Let
\begin{equation*}F_z(i)=
\begin{cases}
H_i^{\geq 0} & if z_i \neq 0\\
H_i & if z_i = 0
\end{cases},
\end{equation*}
where $H_i$ and it's orientation come naturally from $\mathcal{A}$,
denote $St_{\mathcal{A}}(z)=\bigcap_{i=1}^d F_z(i)$, which is a
polytope or subpolytope. Via different lifts, $St_{\mathcal{A}}(z)$
only differs a parallel transport. And we leave the proof of
Proposition \ref{pro:tor:sta} as a special case of toric
hyperk{\"a}hler manifold in next subsection, but let us add some
remarks about it. Let $J$ be a subset of $\{1,\cdots,d\}$, it is
easy to see, a point $\{z|z_i=0, \text{if} \ i \in J \ \text{and} \
z_i \neq 0 \ \text{if} \ i \in \bar{J}\}$ is semi-stable if and only
if the hyperplanes $\{H_i|i\in J\}$ have nonempty intersection
within the polytope $\Delta$ defined by $\mathcal{A}$. So our
interpretation of stability can be viewed as a replacement of
definition about $\mathbb{C}^d_{\Delta}$ of \cite{Gu94a},
\cite{BD00} in the hyperplane arrangement context, and has a natural
extension to the toric hyperk{\"a}hler case.

\begin{example}
Consider the hyperplanes arrangement of Fig \ref{fig:2dim:arr}, the
$St_\mathcal{A}(z)$ for $z_2=0$ is the bottom of the trapezoid, and
empty for $z_1=z_3=0$, thus the first kind point is semi-stable and
the second one is not semi-stable.
\end{example}

\subsection{Toric hyperk{\"a}hler manifold}

Similarly, consider the GIT quotient of
$\mu_{\mathbb{C}}^{-1}(\beta)$ by $M_\mathbb{C}$ with respect to the
linearization on the trivial line bundle induced by $\alpha \in
\mathfrak{m}^*_\mathbb{Z}$. Denote the set of $\alpha$-semi-stable
points in $\mu_{\mathbb{C}}^{-1}(\beta)$ by
$\mu_{\mathbb{C}}^{-1}(\beta)^{\alpha-ss}$, then there is a
categorical quotient $\phi: \mu_{\mathbb{C}}^{-1}(\beta)^{\alpha-ss}
\rightarrow
\mu_{\mathbb{C}}^{-1}(\beta)^{\alpha-ss}//M_{\mathbb{C}}$, where
$\mu_{\mathbb{C}}^{-1}(\beta)^{\alpha-ss}//M_{\mathbb{C}}$ contains
all closed $M_{\mathbb{C}}$ orbits in
$\mu_{\mathbb{C}}^{-1}(\beta)^{\alpha-ss}$. Parallel with toric
case, the stability condition can be generalized to any $\alpha \in
\mathfrak{m}^*$(cf. \cite{Ko08}).

\begin{lemma}
Suppose that $\alpha \in \mathfrak{m}^*$,

\noindent (1) A point $(z,w) \in \mu_{\mathbb{C}}^{-1}(\beta)$ is
$\alpha$-semi-stable if and only if
\begin{equation}\label{eq:hyp:sta}
\alpha \in \sum^d_{i=1} \mathbb{R}_{\geq 0}|z_i|^2 \iota^* e_i^* +
\sum^d_{i=1}\mathbb{R}_{\geq 0}|w_i|^2 (-\iota^* e_i^*).
\end{equation}

\noindent (2) Suppose $(z,w) \in
\mu_{\mathbb{C}}^{-1}(\beta)^{\alpha-ss}$. Then the
$M_\mathbb{C}$-orbit through $(z,w)$ is closed in
$\mu_{\mathbb{C}}^{-1}(\beta)^{\alpha-ss}$ if and only if
\begin{equation*}\label{eq:hyp:clo}
\alpha \in \sum^d_{i=1} \mathbb{R}_{> 0}|z_i|^2 \iota^* e_i^* +
\sum^d_{i=1}\mathbb{R}_{> 0}|w_i|^2 (-\iota^* e_i^*).
\end{equation*}
\end{lemma}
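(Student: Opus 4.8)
The plan is to mirror the proof of Lemma~\ref{lem:sta:num} from the toric case, now carrying the extra fibre coordinates $w$ together with the twisted action $(z,w)\zeta=(z\zeta,w\zeta^{-1})$, and working on the affine variety $\mu_{\mathbb{C}}^{-1}(\beta)$ rather than on all of $\mathbb{C}^{2d}$. I would first treat $\alpha\in\mathfrak{m}^*_{\mathbb{Z}}$, where the GIT definition applies verbatim, and then reduce the general real case to it. For the ``only if'' direction of (1), assume $(z,w)$ is $\alpha$-semi-stable: there is $m\in\mathbb{Z}_{>0}$ and an $M_{\mathbb{C}}$-semi-invariant function $f$ of weight $\chi_\alpha^m$ with $f(z,w)\neq 0$. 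Since $\mu_{\mathbb{C}}^{-1}(\beta)$ is $M_{\mathbb{C}}$-invariant (the complex moment map is equivariant and $M$ is abelian), its coordinate ring splits into $M_{\mathbb{C}}$-weight spaces and $f$ lies in the single weight $m\alpha$. Lifting $f$ to a polynomial on $\mathbb{C}^{2d}$ and decomposing into monomials $\prod_i z_i^{a_i}w_i^{b_i}$, each monomial carries $M_{\mathbb{C}}$-weight $\sum_i(a_i-b_i)\iota^* e_i^*$, so the terms of weight $m\alpha$ can be isolated and at least one such monomial $f_0$ has $f_0(z,w)\neq 0$. Computing with $\mathrm{Exp}(\rho\theta_k)$ exactly as in the toric proof yields $m\alpha=\sum_i(a_i-b_i)\iota^* e_i^*$ with $a_i,b_i\in\mathbb{Z}_{\geq 0}$, and the nonvanishing forces $a_i=0$ when $z_i=0$ and $b_i=0$ when $w_i=0$; dividing by $m$ this is precisely the membership in (\ref{eq:hyp:sta}).

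For the ``if'' direction, given $\alpha$ in the cone with rational coefficients I would clear denominators to write $m\alpha=\sum_i a_i\iota^* e_i^*-\sum_i b_i\iota^* e_i^*$ with $a_i,b_i\in\mathbb{Z}_{\geq 0}$ supported on $\{z_i\neq 0\}$ and $\{w_i\neq 0\}$ respectively. The monomial $\prod_i z_i^{a_i}w_i^{b_i}$ is then an $M_{\mathbb{C}}$-semi-invariant of weight $m\alpha$, descends to $\mu_{\mathbb{C}}^{-1}(\beta)$, and is nonzero at $(z,w)$, so $(z,w)$ is $\alpha$-semi-stable. For real $\alpha$ one follows Konno in taking the numerical criterion (equivalently the symplectic reduction) as the definition, and the equivalence extends by the density of the rational points of the cone together with continuity of both sides; I would record this reduction explicitly rather than leave it implicit.

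The genuinely delicate points I expect to be the main obstacle are twofold. First, one must justify the monomial extraction \emph{on the variety} $\mu_{\mathbb{C}}^{-1}(\beta)$ rather than on $\mathbb{C}^{2d}$: restriction to the level set can create cancellations among monomials sharing the weight $m\alpha$, so the argument must take place in the graded coordinate ring $\mathbb{C}[z,w]/I_\beta$, where one shows that a surviving weight-$m\alpha$ element nonvanishing at $(z,w)$ yields an actual monomial with the required support properties. Second, the passage from integral to real $\alpha$ needs the compatibility of the generalized semi-stability with the symplectic quotient, which is where the continuity/density step does its work.

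For part (2), closedness of the $M_{\mathbb{C}}$-orbit through $(z,w)$ in $\mu_{\mathbb{C}}^{-1}(\beta)^{\alpha\mathrm{-}ss}$ corresponds to $\alpha$ lying in the \emph{relative interior} of the cone, i.e.\ the strict-inequality version. I would establish this via the Hilbert--Mumford / Kempf--Ness criterion: a one-parameter subgroup $\theta\in\mathfrak{m}$ degenerates the orbit without leaving the semi-stable locus exactly when the signs of $\langle\iota^* e_i^*,\theta\rangle$ place $\alpha$ on a proper face of the cone, equivalently $\langle\alpha,\theta\rangle=0$. Strict positivity of all coefficients in the strict-inequality version rules out such $\theta$, forcing the orbit to be closed, and the converse produces a destabilizing $\theta$ from a boundary face. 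This follows the argument of Konno~\cite{Ko08}; the heart of it is the dictionary between faces of the cone $\sum_i\mathbb{R}_{\geq 0}|z_i|^2\iota^* e_i^*+\sum_i\mathbb{R}_{\geq 0}|w_i|^2(-\iota^* e_i^*)$ and the degenerating one-parameter subgroups.
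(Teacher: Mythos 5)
Your argument is correct and is essentially the intended one: the paper itself omits the proof of this lemma, deferring to Konno \cite{Ko08}, and your monomial-extraction argument with the twisted weights $\sum_i(a_i-b_i)\iota^*e_i^*$ is exactly the extension of the paper's proof of Lemma~\ref{lem:sta:num}(1) to the hyperk\"ahler setting. The two subtleties you flag (restricting the weight decomposition to the $M_{\mathbb{C}}$-invariant level set $\mu_{\mathbb{C}}^{-1}(\beta)$, and adopting the numerical criterion as the definition for non-integral $\alpha$) are handled correctly, and your Hilbert--Mumford sketch for (2) matches the argument in \cite{Ko08}.
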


Thus we can identify the symplectic quotient $Y(\alpha, \beta)$ with
the GIT quotient
$\mu_{\mathbb{C}}^{-1}(\beta)^{\alpha-ss}//M_{\mathbb{C}}$ for any
$\alpha \in \mathfrak{m}^*$, and the geometric quotient
$\mu_{\mathbb{C}}^{-1}(\beta)^{\alpha-ss}/M_{\mathbb{C}}$ if
$(\alpha, \beta)$ is a regular value.

It time to examine the numerical stability condition Equation
(\ref{eq:hyp:sta}) a little further. For $\alpha$ lies in
$\mathfrak{m}^*$, there is
\begin{equation*}
\alpha=\sum_{i=1}^d x_i \iota^* e_i^*.
\end{equation*}
Assume $\iota^* e_i^*=a_i^k \theta_k^*$, and $\alpha=\alpha^k
\theta_k^*$, above equation turns to a linear equation system
\begin{equation*}
A x=\alpha,
\end{equation*}
where $A$ is $m \times d$ matrix with entry $a_i^k$, and $\alpha$
represents the column vector $\{\alpha^k\}_{k=1}^m$. As we know
$\alpha$ has lift $\lambda$ s.t. $\alpha=\sum_{i=1}^d \lambda_i
\iota^* e_i^*$ as a particular solution, so it is enough to consider
the undetermined homogeneous system
\begin{equation*}
A x=0.
\end{equation*}
If its $n$-dimensional solution space is denoted as $\mathfrak{N}$,
then the solution of original inhomogeneous system is
$\mathfrak{N}_\alpha \triangleq\lambda+\mathfrak{N}$, a $n$-plane in
$\mathbb{R}^d$.

\begin{proposition}\label{pro:arrcut}
Regarding the point $\lambda$ in $\mathfrak{N}_\alpha$ as the origin
, projecting of $\mathfrak{N}_\alpha$ onto some standard
$\mathbb{R}^n$, then we can identify
$\pi_{\mathbb{R}^n}(\mathfrak{N}_\alpha)$ with $\mathfrak{n}^*$, and
the hyperplane arrangement $H_i$ is defined by
$\pi_{\mathbb{R}^n}(\mathfrak{N}_\alpha \bigcap \{x_i=0\})$, where
$\{x_i=0\}$ is the coordinate hyperplane in $\mathbb{R}^d$.
\end{proposition}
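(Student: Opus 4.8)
The plan is to reduce the statement to the duality in the fundamental exact sequences, so that the ``arrangement cut out by coordinate hyperplanes'' becomes a direct translation of the definition of $H_i$. First I would identify the solution space $\mathfrak{N}=\ker A$ with $\mathfrak{n}^*$ intrinsically. Since $\iota^* e_i^*=a_i^k\theta_k^*$, the matrix $A$ is precisely the matrix of $\iota^*:\mathfrak{t}^*\to\mathfrak{m}^*$ in the bases $\{e_i^*\}$ and $\{\theta_k^*\}$; concretely, for $x=\sum_i x_i e_i^*\in\mathfrak{t}^*=\mathbb{R}^d$ one has $(\iota^* x)_k=\sum_i a_i^k x_i=(Ax)_k$. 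Hence $\mathfrak{N}=\ker A=\ker\iota^*$, and by exactness of the dual sequence $0\leftarrow\mathfrak{m}^*\xleftarrow{\iota^*}\mathfrak{t}^*\xleftarrow{\pi^*}\mathfrak{n}^*\leftarrow 0$ at $\mathfrak{t}^*$ we get $\mathfrak{N}=\mathrm{Im}\,\pi^*$, so $\pi^*$ furnishes a linear isomorphism $\mathfrak{n}^*\xrightarrow{\sim}\mathfrak{N}$. Regarding $\lambda$ as the origin, the affine map $\Phi:\mathfrak{n}^*\to\mathfrak{N}_\alpha$, $\Phi(y)=\lambda+\pi^* y$, is then an affine isomorphism sending $0$ to $\lambda$.

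The key step is the coordinate computation. The $i$-th coordinate of a covector in $\mathfrak{t}^*=\mathbb{R}^d$ is its pairing with $e_i$, so the $i$-th coordinate of $\Phi(y)=\lambda+\pi^* y$ is $\lambda_i+\langle\pi^* y,e_i\rangle=\lambda_i+\langle y,\pi e_i\rangle=\langle u_i,y\rangle+\lambda_i$, using $\pi e_i=u_i$ and the defining adjointness of $\pi^*$. Therefore the slice $\mathfrak{N}_\alpha\cap\{x_i=0\}$ is exactly $\Phi(\{y\in\mathfrak{n}^*:\langle u_i,y\rangle+\lambda_i=0\})=\Phi(H_i)$; that is, under $\Phi$ the coordinate hyperplane $\{x_i=0\}$ pulls back to the oriented hyperplane $H_i$ of $\mathcal{A}$, with the lift $\lambda$ producing exactly the constants $\lambda_i$. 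This is the entire content of the statement at the level of the affine $n$-plane $\mathfrak{N}_\alpha\subset\mathbb{R}^d$.

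It remains to pass to a concrete $\mathbb{R}^n$ by projection, which I would handle as a routine matter of linear algebra. Because $\iota$ is injective, $\iota^*$ is surjective and $A$ has rank $m$; choosing $m$ linearly independent columns of $A$ and letting $S\subset\{1,\dots,d\}$ be the complementary set of $n$ indices, the system $Ax=0$ determines the coordinates outside $S$ from those in $S$, so $\mathfrak{N}$ (hence $\mathfrak{N}_\alpha$) is a graph over $\mathbb{R}^S\cong\mathbb{R}^n$ and the coordinate projection $\pi_{\mathbb{R}^n}$ restricts to an affine isomorphism on $\mathfrak{N}_\alpha$. Composing, $\pi_{\mathbb{R}^n}\circ\Phi$ identifies $\mathfrak{n}^*$ with $\pi_{\mathbb{R}^n}(\mathfrak{N}_\alpha)$ and carries $H_i$ onto $\pi_{\mathbb{R}^n}(\mathfrak{N}_\alpha\cap\{x_i=0\})$, which is the assertion. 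I do not expect a genuine obstacle here; the only points requiring care are the adjointness bookkeeping in the second paragraph (keeping straight which pairing is $\mathfrak{n}$-versus-$\mathfrak{n}^*$ and which is $\mathfrak{t}$-versus-$\mathfrak{t}^*$) and the observation that different admissible choices of $S$ change $\pi_{\mathbb{R}^n}(\mathfrak{N}_\alpha)$ only by a linear automorphism, so the resulting arrangement is well defined up to the linear identification with $\mathfrak{n}^*$.
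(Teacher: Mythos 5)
Your proposal is correct and follows essentially the same route as the paper: identify the affine $n$-plane $\mathfrak{N}_\alpha$ with $\mathfrak{n}^*$ based at $\lambda$, check that the $i$-th coordinate function restricted to it becomes $\langle u_i,\cdot\rangle+\lambda_i$, and then observe that the coordinate projection is an affine isomorphism well defined up to a linear automorphism. Your version is in fact tidier than the paper's, since you make the identification explicit through $\pi^*$ and the exactness of the dual sequence rather than recovering the normals $u_j$ by ad hoc inner-product bookkeeping after projecting.
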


\begin{proof}
Denote the standard basis of $\mathbb{R}^d$ as $\partial_i$. If $x
\in \mathfrak{N}_\alpha \bigcap\{x_i=0\}$ is a vector in
$\mathbb{R}^d$, by definition, $ \langle x,
\partial_i \rangle=0$. On another hand, $x$ corresponds the vector $x-\lambda$ in
$\mathfrak{N}_\alpha$, thus $\langle x-\lambda,
\partial_i \rangle=\langle -\lambda, \partial_i \rangle=-\lambda_i$. Now $\mathfrak{N}_\alpha \bigcap
\{x_i=0\}$ looks a little different from the hyperplane arrangement
in $\mathfrak{n}^*$(see Fig \ref{fig:Arr}). We have to project it
onto some $\mathbb{R}^n$. If $\partial_j \in \mathbb{R}^n$, then we
simply have $u_j=\pi_{\mathbb{R}^n}
\partial_j$, otherwise, the normal $u_j$ will be the unique vector preserving
above inner product which can be calculated by a little Euclidean
geometry. Thus we recover the arrangement, and it is easy to see
that the resulted arrangement to different projections differ at
most $GL(n, \mathbb{Z})$ transforms, i.e. all equivalent.
\end{proof}

\begin{example}
Consider the 1-dimensional torus acts on $\mathbb{H}^3$
diagonally,and choose $\alpha=3$ with lift $\lambda=(1,1,1)$, we
have a two dimensional solution space and its projection onto
$\mathfrak{n}^* \cong \mathbb{R}^2$ illustrated in Fig
\ref{fig:Arr}, where $O$ is the projection of $\lambda$.
\end{example}

\begin{figure}[h]
\centering \subfigure[the solution space $\mathfrak{N}_\alpha$]{
\label{fig:Arr:Solu} 
\includegraphics[width=2.3in]{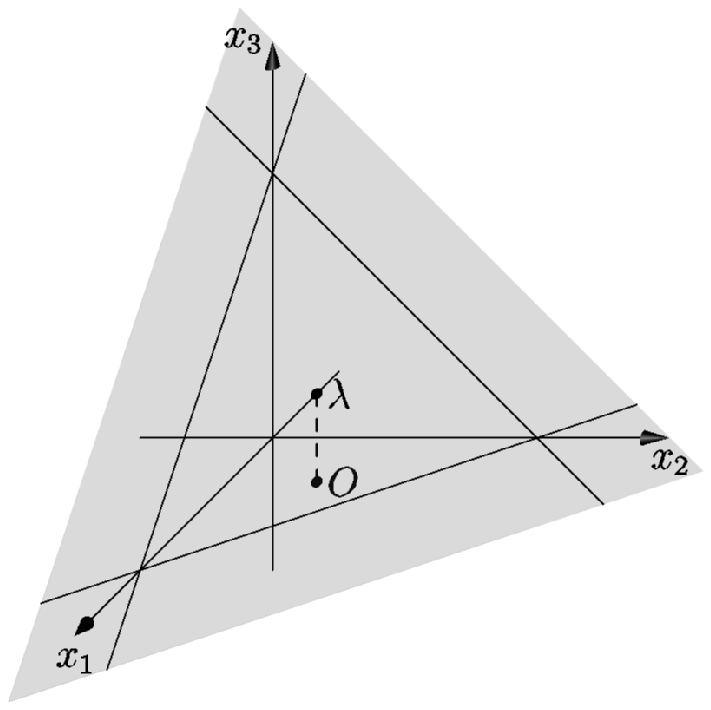}}
\hspace{0.3in} \subfigure[projected onto $x_1x_2$ plane identifies
with the arrangement in $\mathfrak{n}^*$]{
\label{fig:Arr:Proj} 
\includegraphics[width=2.1in]{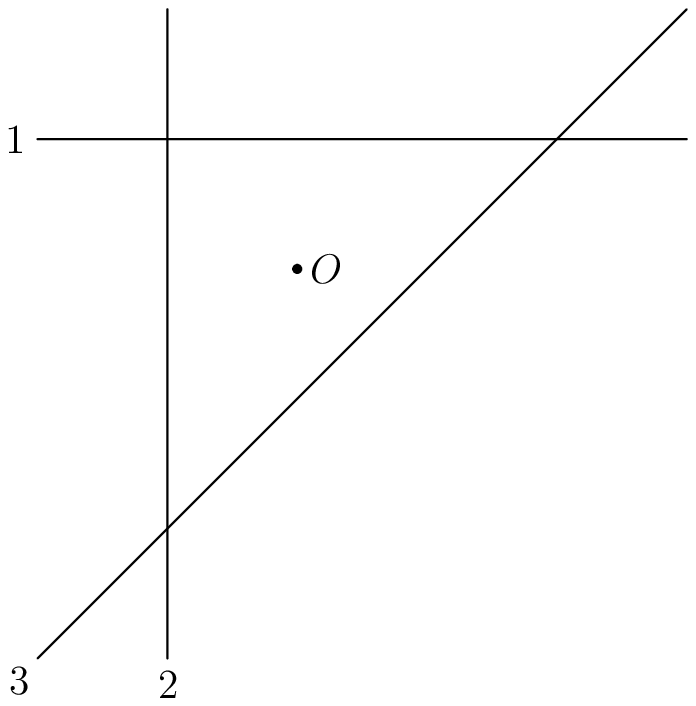}}
\caption{the relation of solution space and hyperplane arrangements}
\label{fig:Arr} 
\end{figure}

Notice that the projection $\pi_{\mathbb{R}^n}$ does not affect the
intersection relations of the hyperplane in $\mathfrak{N}_\alpha$
and $\mathfrak{n}^*$, i.e. the relative positions. So we will not
distinguish them from each other in the later practice. In effect,
the $\mathfrak{N}_\alpha$ picture is more close to the intrinsic
geometry of toric variety and toric hyperk{\"a}hler manifold.


As promised, we give the geometric interpretation of the stability
condition. Recall Equation (\ref{eq:hyp:sta}), we easily have

\begin{lemma}\label{lem:sta:alg}

Given $I$, $J$ subset of $\{1,\dots,d\}$, the point $\{(z,w)|z_i=0,
w_j=0, \text{if and only if} \ i \in I, j \in J\}$ is
$\alpha$-semi-stable if and only if there exists a solution $x \in
\mathfrak{N}_\alpha$ s.t. $x_i \leq 0, x_j \geq 0, for \ i \in I, j
\in J$.
\end{lemma}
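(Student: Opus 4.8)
The plan is to read the statement off directly from the numerical semi-stability criterion in Equation (\ref{eq:hyp:sta}), using the description of $\mathfrak{N}_\alpha$ as the solution set of $\sum_{k=1}^d x_k \iota^* e_k^* = \alpha$. The bridge between the two formulations is the elementary decomposition of a real vector into its positive and negative parts: I will match the positive part of a vector $x \in \mathbb{R}^d$ with the coefficients $|z_k|^2$ appearing in the first cone of (\ref{eq:hyp:sta}), and the negative part with the coefficients $|w_k|^2$ appearing in the second cone. The sign constraints $x_i \leq 0$ ($i \in I$) and $x_j \geq 0$ ($j \in J$) will then be exactly the condition that the vanishing coordinates contribute nothing to the relevant cone.

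First I would observe that for a point $(z,w)$ with the prescribed zero pattern, semi-stability depends only on the supports, so the lemma can indeed be phrased purely in terms of $I$ and $J$. Indeed, in Equation (\ref{eq:hyp:sta}) each $|z_i|^2$ with $i \notin I$ and each $|w_j|^2$ with $j \notin J$ is a fixed positive scalar, which is absorbed into the coefficient ranging over $\mathbb{R}_{\geq 0}$, while the terms with $i \in I$ or $j \in J$ vanish. Hence (\ref{eq:hyp:sta}) is equivalent to the existence of constants $c_i \geq 0$ ($i \notin I$) and $d_j \geq 0$ ($j \notin J$) with $\alpha = \sum_{i \notin I} c_i \iota^* e_i^* - \sum_{j \notin J} d_j \iota^* e_j^*$.

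For the forward direction, given such $c_i, d_j$, I set $x_k = c_k - d_k$ with the convention $c_k = 0$ for $k \in I$ and $d_k = 0$ for $k \in J$. Then $\sum_k x_k \iota^* e_k^* = \sum_{i \notin I} c_i \iota^* e_i^* - \sum_{j \notin J} d_j \iota^* e_j^* = \alpha$, so $x \in \mathfrak{N}_\alpha$; moreover $x_i = -d_i \leq 0$ for $i \in I$ and $x_j = c_j \geq 0$ for $j \in J$, as required. Conversely, given $x \in \mathfrak{N}_\alpha$ with $x_i \leq 0$ for $i \in I$ and $x_j \geq 0$ for $j \in J$, I decompose $x_k = x_k^+ - x_k^-$ into positive and negative parts. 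The sign constraints force $x_i^+ = 0$ for $i \in I$ and $x_j^- = 0$ for $j \in J$, so the positive part is supported on the complement of $I$ and the negative part on the complement of $J$; substituting into $\sum_k x_k \iota^* e_k^* = \alpha$ exhibits $\alpha$ as a member of $\sum_{i \notin I} \mathbb{R}_{\geq 0}\iota^* e_i^* + \sum_{j \notin J}\mathbb{R}_{\geq 0}(-\iota^* e_j^*)$, which is precisely (\ref{eq:hyp:sta}) for this point.

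I do not expect a serious obstacle: the entire content is the observation that the positive/negative-part splitting of a vector in $\mathfrak{N}_\alpha$ corresponds exactly to the splitting of $\alpha$ into the $z$-cone and the $w$-cone of the numerical criterion. The one point demanding care is the overlap $I \cap J$, where both sign constraints apply and force $x_k = 0$; this is consistent, since then $z_k = w_k = 0$ and neither contributes to (\ref{eq:hyp:sta}). I would make explicit at the outset that semi-stability of the point depends only on the index sets $I$ and $J$, and not on the actual nonzero values of the coordinates, as this is what licenses the statement and keeps the argument a matter of bookkeeping rather than genuine analysis.
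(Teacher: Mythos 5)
Your proof is correct and is essentially the paper's own argument: the paper offers no written proof, merely asserting the lemma as an immediate consequence of the numerical criterion (\ref{eq:hyp:sta}) together with the identification of $\mathfrak{N}_\alpha$ as the solution set of $\sum_k x_k\,\iota^* e_k^*=\alpha$, and your positive/negative-part decomposition is exactly the bookkeeping that justifies that assertion. Your explicit remarks that semi-stability depends only on the index sets $I$, $J$ and that the overlap $I\cap J$ forces $x_k=0$ are welcome additions that the paper leaves implicit.
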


Fixing the arrangement $\mathcal{A}$, we will associate a region to
every point $(z,w) \in \mathbb{H}^d$. Set
\begin{equation*}F_z(i)=
\begin{cases}
H_i^{\geq 0} & if \ z_i \neq 0\\
H_i & if \ z_i = 0
\end{cases},
\end{equation*}
and
\begin{equation*}F_w(i)=
\begin{cases}
H_i^{\leq 0}& if \ w_i \neq 0\\
H_i & if \ w_i = 0
\end{cases},
\end{equation*}
define $St_{\mathcal{A}}(z,w)=\bigcap_{i=1}^d (F_z(i)\bigcup
F_w(i))$, which is a union of polytopes or subpolytopes. Then we are
in the position to prove Proposition \ref{pro:hyp:sta}.

\begin{proof}
We already know the hyperplane arrangement is the solution space cut
by the coordinate hyperplanes in $\mathbb{R}^d$. The non empty of
$St_{\mathcal{A}}(z,w)$ of the point $\{(z,w)|z_i=0, w_j=0,\
\text{if and only if} \ i \in I, j \in J\}$ implies that there is a
$x \in \mathfrak{N}_\alpha$ s.t. $x_i \leq 0, x_j \geq 0, for \ i
\in I, j \in J$. By Lemma \ref{lem:sta:alg}, This means this point
is semi-stable, and vice versa.
\end{proof}

Proposition \ref{pro:tor:sta} concerning the toric case is an easy
consequence by letting all $w_i$ be zero.

\section{Coverings of toric hyperk{\"a}hler manifolds}
In this part, $\beta$ is taken to be zero. It is enough to merely
consider the hyperplanes arrangement $H^1$. We will abuse the
notation using $H$ in stead of $H^1$.

\subsection{The geometry of the extended core and core} The subset
of $Y(\alpha,0)$
\begin{equation*}
Z=\bar{\mu}_\mathbb{C}^{-1}(0)=\{[z,w] \in Y(\alpha,0)| z_i w_i=0 \
\text{for all} \ i\},
\end{equation*}
is called the extended core by Proudfoot(cf. \cite{Pr04}), which
naturally breaks into components
\begin{equation*}
Z_\epsilon=\{[z,w] \in Y(\alpha,0)| w_i=0 \ if \ \epsilon(i)=1 \ and
\ z_i=0 \ if \ \epsilon(i)=-1\}.
\end{equation*}
The variety $Z_\epsilon \subset Y(\alpha,0)$ is a $n$-dimensional
isotropy Kahler subvariety of $Y(\alpha,0)$ with an effective
hamiltonian $T^n$-action, hence a toric variety itself. It is not
hard to see this is just the toric variety corresponding to the
oriented hyperplane arrangement $\mathcal{A}_\epsilon$. Denote the
associated polytope of $\mathcal{A}_\epsilon$ as $\Delta_\epsilon$.
The set $Z_{cpt}=\bigcup_{\epsilon \in \Theta_{cpt}} Z_\epsilon$,
where $\Theta_{cpt}=\{\epsilon|\Delta_\epsilon \ \text{bounded}\}$,
is called the core, union of compact toric varieties
$X(\mathcal{A}_\epsilon), \epsilon \in \Theta_{cpt}$.

It is natural to ask when a toric hyperk{\"a}hler manifold has
nonempty core. Let $\mathcal{A}$ be the smooth arrangement and $u_i$
its normals, it amounts to check wether $\mathcal{A}$ contains
bounded $n$-dimensional polytopes. Then $\mathcal{A}$ does not
contain bounded polytopes if and only if there is a subset $K$ of
$\{1,\dots,d\}$, the length $|K|<n$, s.t. for each $k \in K$, $u_k$
is independent of all others $\{u_i|i \in \{1,\dots,d\}, i \neq
k\}$. In this case, $Y(\alpha,0)$ can be written as
$\tilde{Y}(\tilde{\alpha},0)\times \mathbb{H}^{|K|}$, where
$\tilde{Y}(\tilde{\alpha},0)$ is a $4(n-|K|)$ dimensional toric
hyperk{\"a}hler manifold(see Fig \ref{fig:core:empty}). Thus except
the trivial product case where some $u_i$ is linear independent of
all others, the toric hyperk{\"a}hler variety $Y(\alpha,0)$ has
nonempty core, which will be take for granted in the remaining part.

\begin{figure}[h]
\centering \subfigure[trivial product
$\widehat{\mathbb{C}^2/\mathbb{Z}_2} \times \mathbb{H}$ with empty
core]{
\label{fig:core:empty} 
\includegraphics[width=2.2in]{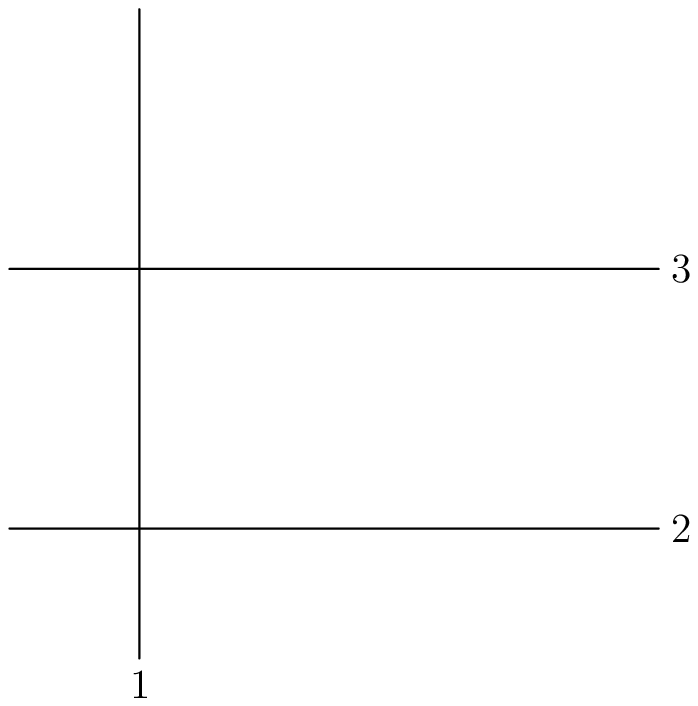}}
\hspace{0.3in} \subfigure[with nonempty core $\mathbb{C}P^2$
intersecting with Hirzebruch surface]{
\label{fig:core:nonempty} 
\includegraphics[width=2.2in]{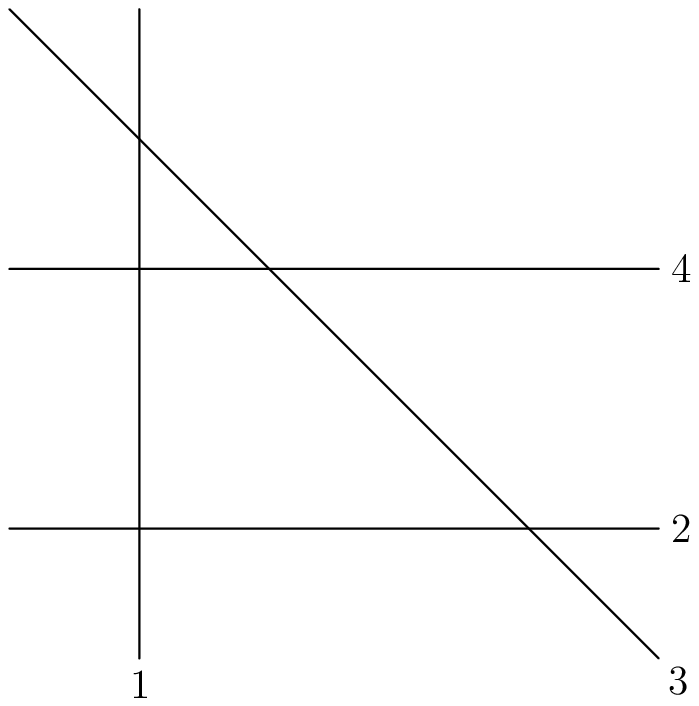}}
\caption{bounded polytopes in hyperplane arrangements}
\label{fig:core} 
\end{figure}

\subsection{Cotangent bundle of toric variety} Another important
result in \cite{BD00} concerns the cotangent bundle of toric
variety. Suppose $\mathcal{A}$ is nonempty, the toric variety
$X(\mathcal{A})$ is automatically in the core of $Y(\alpha,0)$.
Dancer and Bielawski proved that $T^*X(\mathcal{A})$ isomorphic to
an open subset $U$. Later, this result was generalized by Konno(see
\cite{Ko02}, lemma 4.2), namely

\begin{lemma}\label{lem:cotantor}
Let $Y(\alpha,0)$ be a toric hyperk{\"a}hler manifold. If
$X(\mathcal{A}_\epsilon)$ is not empty, then its cotangent bundle
$T^*X(\mathcal{A}_\epsilon)$ is contained in $Y(\alpha,0)$ as an
open subset. Moreover, the hyperk{\"a}hler metric restricted on the
zero section of this cotangent bundle, is the canonical metric on
toric variety $X(\mathcal{A}_\epsilon)$.
\end{lemma}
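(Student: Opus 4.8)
The plan is to reduce to the case $\epsilon = 1$ and then realize the claimed open subset as a \emph{holomorphic} cotangent-bundle reduction. By the orientation-reversal discussion in Section 2.2 — the map sending $(z_j, w_j)$ to $(w_j, -z_j)$ whenever $\epsilon(j) = -1$ is an isometry of $\mathbb{H}^d$ intertwining the $\tilde{M}$- and $M$-actions — the quotient built from $\mathcal{A}_\epsilon$ is identified with the one built from $\mathcal{A}$, carrying $X(\mathcal{A}_\epsilon)$ to $X(\mathcal{A})$. So it suffices to treat $\epsilon = 1$, and I would set $U = \{[z,w] \in Y(\alpha,0) : z \in (\mathbb{C}^d)^{\alpha-ss}\}$, where $\alpha$-semistability of $z$ is understood in the toric sense of Lemma \ref{lem:sta:num}.

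First I would check that $U$ is a well-defined open subset. Comparing the toric criterion (\ref{eq:sta:num}) with the hyperk{\"a}hler criterion (\ref{eq:hyp:sta}), any $z$ that is $\alpha$-semistable in $\mathbb{C}^d$ makes $(z,w)$ automatically $\alpha$-semistable in $\mu_\mathbb{C}^{-1}(0)$, since the nonnegative cone of (\ref{eq:sta:num}) is contained in the larger cone of (\ref{eq:hyp:sta}); hence $U$ genuinely sits inside $Y(\alpha,0)$. Openness follows because $\alpha$-semistability of $z$ is an open, $M_\mathbb{C}$-invariant condition, so it descends to a well-defined open condition on classes $[z,w]$.

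The heart of the argument is the identification $T^*X(\alpha) \cong U$. Since $\alpha$ is regular, $M_\mathbb{C}$ acts freely on $V^s := (\mathbb{C}^d)^{\alpha-ss}$ with geometric quotient $X(\alpha) = V^s/M_\mathbb{C}$. I would invoke the cotangent-reduction principle in the holomorphic-symplectic category: the lifted $M_\mathbb{C}$-action on $T^*\mathbb{C}^d = \mathbb{C}^d \times \mathbb{C}^d$ has holomorphic moment map exactly $\mu_\mathbb{C}$, and
\[
\bigl(\mu_\mathbb{C}^{-1}(0) \cap T^*V^s\bigr)/M_\mathbb{C} \;\cong\; T^*(V^s/M_\mathbb{C}) = T^*X(\alpha).
\]
The key linear-algebra input is that, for $z \in V^s$, the infinitesimal orbit $\mathfrak{m}_\mathbb{C}\cdot z$ consists of the vectors $(\iota(\xi)_i z_i)_i$, so a covector $w$, paired with $v$ via $\sum_i w_i v_i$, annihilates this orbit precisely when $\sum_i z_i w_i \iota(\xi)_i = 0$ for all $\xi$, i.e. when $(z_i w_i) \in \ker \iota^*$, which is exactly $\mu_\mathbb{C}(z,w) = 0$. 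Thus, fixing a representative $z$, the fibre $\{w : \mu_\mathbb{C}(z,w)=0\}$ is precisely the annihilator of the orbit tangent space, i.e. $T^*_{[z]}X(\alpha)$; assembling these over $X(\alpha)$ gives the biholomorphism onto $U$. I expect this identification — verifying it is a global biholomorphism compatible with the complex structure $I_1$, rather than merely a fibrewise linear isomorphism — to be the main technical obstacle.

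Finally, for the metric I would restrict to the zero section $\{w = 0\}$. There $\mu_\mathbb{R}(z,0) = \frac{1}{2}\sum|z_i|^2 \iota^*e_i^* = \mu(z)$, so the zero section is $\mu^{-1}(\alpha)/M = X(\alpha)$, and the inclusion $\mathbb{C}^d \hookrightarrow \mathbb{H}^d$, $z \mapsto (z,0)$, is isometric and $M$-equivariant for the flat metrics. A curve in the zero section lifts to one of the form $(z(t),0)$, so the hyperk{\"a}hler metric restricts along the zero section to the K{\"a}hler quotient metric of $\mathbb{C}^d /\!\!/_\alpha M$, which is Delzant's canonical toric metric on $X(\alpha)$. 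Transporting back through the orientation-reversal isometry then yields the statement for general $\epsilon$.
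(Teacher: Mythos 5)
Your proposal follows essentially the same route as the paper's own sketch: restrict to $(\mathbb{C}^d)^{\alpha-ss}\times\mathbb{C}^d$, observe that the condition $\mu_{\mathbb{C}}(z,w)=0$ says exactly that $w$ annihilates the tangent directions of the $M_{\mathbb{C}}$-orbit through $z$, and conclude that the quotient is $T^*X(\mathcal{A})$. The additional details you supply --- the reduction to $\epsilon=1$ via the orientation-reversal map $(z_j,w_j)\mapsto(w_j,-z_j)$ and the identification of the zero-section metric with the K\"ahler quotient metric --- are exactly the points the paper delegates to Konno and to its own Section 2.2 discussion, and your treatment of them is consistent with both.
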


From now on, the open set isomorphic to $T^*X(\mathcal{A}_\epsilon)$
is denoted as $U_\epsilon$(stands for $U_{\mathcal{A}_\epsilon}$ in
effect). We will only sketch the proof in case $\epsilon(i)=1$ for
all $i$, and reader could consult \cite{Ko02} for the detail.
Consider the open subset $(\mathbb{C}^d)^{\alpha-ss} \times
\mathbb{C}^d$ of $\mathbb{H}^d$. The group $M$ acts on it freely, so
we can perform the hyperk{\"a}hler quotient construction on
$(\mathbb{C}^d)^{\alpha-ss} \times \mathbb{C}^d$ and obtain an open
subset $U$ of $Y(\alpha,0)$. In order to show that $U$ is isomorphic
to $T^*X(\mathcal{A})$, we identify $U$ with the GIT quotient
$((\mathbb{C}^d)^{\alpha-ss} \times \mathbb{C}^d) \cap
\mu_{\mathbb{C}}^{-1}(0)/M_\mathbb{C}$, i.e.
\begin{equation*}
\{(z,w) \in (\mathbb{C}^d)^{\alpha-ss} \times
\mathbb{C}^d|\sum_{i=1}^d z_i w_i \iota^* e^*_i=0\}/M_\mathbb{C},
\end{equation*}
where we directly apply geometric quotient for $(\alpha,0)$ is a
regular value. Above equation simply says that the vector $w \in
T^*_z (\mathbb{C}^d)^{\alpha-ss}$ annihilates the tangent vectors
along the $M_\mathbb{C}$ orbits, i.e. the vertical tangent vectors
of the projection $(\mathbb{C}^d)^{\alpha-ss} \rightarrow
X(\mathcal{A})$. This show that $U$ is isomorphic to
$T^*X(\mathcal{A})$.

\subsection{The proof of the main theorem}

All the nonempty canonical open sets $U_{\epsilon}$ naturally
constitute a covering of $Y(\alpha,0)$. Bielawski pointed it out
that in \cite{Bi99}, the toric hyperk{\"a}hler manifold can be
viewed as the gluing the cotangent bundle of some toric varieties.
So it means that we can choose all the toric varieties in the
extended core to do this job, but in a subtle way. The only chance
to see how these cotangent bundles are glued is when there is as few
as components are involved, i.e. using few cotangent bundles to glue
the toric hyperk{\"a}hler manifold. In the following, we will see
compact toric varieties in the core play such a role.

\begin{theorem}
Let $Y(\alpha,0)$ be a smooth toric hyperk{\"a}hler manifold with
nonempty core, then the canonical open set $U_\epsilon \cong
T^*X(\mathcal{A}_\epsilon), \epsilon \in \Theta_{cpt}$ is a covering
of $Y(\alpha,0)$, i.e. the cotangent bundles of compact toric
varieties in the core are enough to glue $Y(\alpha,0)$.
\end{theorem}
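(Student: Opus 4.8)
The plan is to reduce the covering statement to a purely combinatorial claim about the state sets, and then to a convex-geometric lemma about the normals $u_i$. First I would pass everything through the GIT picture of Section 3. Using Lemma \ref{lem:cotantor} and the identification of $Y(\alpha,0)$ with $\mu_{\mathbb{C}}^{-1}(0)^{\alpha-ss}//M_{\mathbb{C}}$, membership $[z,w]\in U_\epsilon$ is the same as $\alpha$-semistability of the base of $T^*X(\mathcal{A}_\epsilon)$; by Proposition \ref{pro:tor:sta} applied to $X(\mathcal{A}_\epsilon)$ (i.e. after the coordinate swap $(z_j,w_j)\mapsto(w_j,-z_j)$ on the indices with $\epsilon(j)=-1$ from Section 2.2, which sends the base coordinate to $z_i$ when $\epsilon(i)=1$ and to $w_i$ when $\epsilon(i)=-1$) this reads
\[
S_\epsilon:=\bigcap_{\epsilon(i)=1}F_z(i)\ \cap\ \bigcap_{\epsilon(i)=-1}F_w(i)\ \neq\ \emptyset .
\]
Since $[z,w]\in Y(\alpha,0)$ already gives $St_{\mathcal{A}}(z,w)=\bigcap_i\bigl(F_z(i)\cup F_w(i)\bigr)\neq\emptyset$ by Proposition \ref{pro:hyp:sta}, the theorem becomes: for a suitable $p\in St_{\mathcal{A}}(z,w)$ one can choose $\epsilon$ with $p\in S_\epsilon$ and, crucially, with $\Delta_\epsilon$ bounded so that $\epsilon\in\Theta_{cpt}$.

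Next I would extract the combinatorics. Choosing for each $i$ any side with $p\in F_z(i)$ or $p\in F_w(i)$ already yields some $\epsilon$ with $p\in S_\epsilon$, which reproves that the nonempty $U_\epsilon$ cover $Y(\alpha,0)$; hence the entire content is boundedness. To control it, take $p$ to be a vertex of the polytope $St_{\mathcal{A}}(z,w)$, which by smoothness of $\mathcal{A}$ is a $0$-flat lying on exactly $n$ hyperplanes $T=\{i:p\in H_i\}$ with $\{u_i\}_{i\in T}$ a $\mathbb{Z}$-basis of $\mathfrak{n}$. A short case check shows that for $i\notin T$ the sign $\epsilon(i)$ is forced (so that $p$ lies strictly on the positive side of $H_i$ and the corresponding base coordinate is nonzero), while for $i\in T$ both signs are admissible; in all $2^n$ completions one has $\{i:b_i^\epsilon=0\}\subseteq T$, so
\[
S_\epsilon=\overline{\Delta_\epsilon}\cap\bigcap_{i:\,b_i^\epsilon=0}H_i\ \ni\ p ,
\]
because $p$ lies on every $H_i$ with $i\in T$ and strictly inside every other facet of $\Delta_\epsilon$. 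Thus $[z,w]\in U_\epsilon$ for every choice of the $n$ free signs on $T$, and it remains only to pick one of them making $\Delta_\epsilon$ bounded.

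The hard part — and the only place the nonempty-core hypothesis enters — is this last step. Writing $\sigma$ for the forced signs on $T^c$, the polytope $\Delta_\epsilon$ is bounded iff its recession cone is trivial, i.e. iff $\{\epsilon(i)u_i\}_{i\in T}\cup\{\sigma(i)u_i\}_{i\in T^c}$ positively spans $\mathfrak{n}$. So the theorem reduces to the lemma: if $\{u_i\}$ has no coloop — which, by the discussion preceding the statement, is exactly the nonempty-core condition that no $u_k$ be independent of all the others — then for any basis $T$ and any signs $\sigma$ on $T^c$ there is a sign choice on $T$ making this family positively spanning. Equivalently, the cone $C_0=\{y:\sigma(i)\langle u_i,y\rangle\geq0,\ i\in T^c\}$ must meet some closed orthant (in the coordinates dual to $\{u_i\}_{i\in T}$) only at the origin; I would prove this by a Gordan/Farkas argument, showing that if $C_0$ met every orthant nontrivially then some basis coordinate would be unconstrained by the defining inequalities, i.e. $\{u_i\}_{i\in T^c}$ would lie in a coordinate hyperplane, exhibiting a coloop and contradicting the hypothesis. (A one-line test confirms the lemma fails precisely when a coloop is present, matching the excluded trivial product $Y=\tilde Y\times\mathbb{H}^{|K|}$.) The one remaining loose end is when $St_{\mathcal{A}}(z,w)$ has no vertex, i.e. a nontrivial lineality space; there I would replace $p$ by a point of a minimal face, take $T$ to be a maximal independent set among the normals active on that face, and complete it to a positively spanning family using the same lemma. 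I expect this boundedness lemma to be the main obstacle, the vertex/face bookkeeping being routine.
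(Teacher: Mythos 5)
Your proposal follows the same overall route as the paper: identify $U_\epsilon$ with the locus where the ``base'' coordinates $(z_J,w_{\bar J})$ are $\alpha$-semistable for $X(\mathcal{A}_\epsilon)$, translate everything into the state sets of Propositions \ref{pro:tor:sta} and \ref{pro:hyp:sta}, and reduce the covering statement to combinatorics of the arrangement. The difference is in the endgame. The paper argues the contrapositive globally: if $[z,w]$ misses every $U_\epsilon$ with $\epsilon\in\Theta_{cpt}$ then $St_{\mathcal{A}}(z,w)\cap\Delta_\epsilon=\emptyset$ for every bounded chamber, and then invokes, with only a figure as justification, the assertion that every nonempty $St_{\mathcal{A}}(z,w)$ must meet some bounded chamber. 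You instead argue directly and locally: pick a point $p$ of $St_{\mathcal{A}}(z,w)$ lying on $n$ independent hyperplanes, observe that the $2^n$ sign completions all put $[z,w]$ into the corresponding $U_\epsilon$, and isolate the entire content of the theorem in a precise positive-spanning lemma (equivalently: every vertex of a simple, coloop-free arrangement is a vertex of a bounded chamber). This is a genuine sharpening of the paper's write-up --- your lemma is exactly the combinatorial kernel that the paper asserts without proof --- but note that your proposal, like the paper, does not actually prove that kernel; you only sketch a Gordan/Farkas strategy and correctly flag it as the main obstacle. The lemma is true, and the pointed case is easy (if the forced normals $\{\sigma(j)u_j\}_{j\in T^c}$ span $\mathfrak{n}$, take $v$ in the interior of their positive cone with all coordinates nonzero in the basis dual to $\{u_i\}_{i\in T}$ and set $\epsilon=-\mathrm{sign}(v)$); the degenerate case, where those normals span a proper subspace, needs an additional induction on the lineality space of the cone $C_0$, and this is where the no-coloop hypothesis is actually consumed --- your one-sentence claim that failure forces a coloop is correct but is not yet a proof. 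Finally, your ``no vertex'' loose end has a cleaner fix than the one you propose: since the $u_i$ span $\mathfrak{n}$, every closed face of the arrangement is a pointed polyhedron, and $St_{\mathcal{A}}(z,w)$, being a nonempty union of closed faces, always contains a $0$-dimensional face of the \emph{arrangement}; taking $p$ to be such a point restores $|T|=n$ and avoids the minimal-face bookkeeping entirely.
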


\begin{proof}
We first check the stability condition with respect to the toric
variety $X(\mathcal{A}_\epsilon), \epsilon \in \Theta_{cpt}$. The
point $(z_J, w_{\bar{J}}) \in \mathbb{C}^d$, where
$J=\{i|\epsilon(i)=1\}$ and $\bar{J}$ is its complement, is
semi-stable if and only if
\begin{equation}\label{eq:sta:epsilon}
\alpha \in \sum_J \mathbb{R}_{\geq 0}|z_i|^2 \iota^* e_i^* +
\sum_{\bar{J}} \mathbb{R}_{\geq 0}|w_i|^2 (-\iota^* e_i^*).
\end{equation}
If we denote these points by $(\mathbb{C}^d)^{\alpha-ss}_\epsilon$,
then $(\mathbb{C}^d)^{\alpha-ss}_\epsilon \times \mathbb{C}^d$ is a
subset of $(\mathbb{H}^d)^{\alpha-ss}$. Performing the
hyperk{\"a}hler quotient on it results in $U_\epsilon$, which
isomorphic to $T^*X(\mathcal{A}_\epsilon)$. To prove the union of
$U_\epsilon$ covers $Y(\alpha,0)$, it suffices to show that the
union of $(\mathbb{C}^d)^{\alpha-ss}_\epsilon \times \mathbb{C}^d$
covers $(\mathbb{H}^d)^{\alpha-ss}$. In practice, we show that if a
point lies out side for every $(\mathbb{C}^d)^{\alpha-ss}_\epsilon
\times \mathbb{C}^d$, then it must be unstable.

We claim that if $(z,w)$ lies out side of
$(\mathbb{C}^d)^{\alpha-ss}_\epsilon \times \mathbb{C}^d$ for some
fixed $\epsilon \in \Theta_{cpt}$, then $St_{\mathcal{A}}(z,w) \cap
\Delta_\epsilon=\emptyset$. Without losing any generality, we assume
$\epsilon(i)=1$. The point $\{z|z_i=0, \text{if and only if} \ i \in
I\}$ is unstable for $X(\mathcal{A})$ means that $\{H_i\}_{i \in I}$
do not have intersection in $\Delta$. Since $z_i=0$ for $i \in I$,
the region $St_{\mathcal{A}}(z,w)$ now lies in the intersection of
half spaces $\{H^{0 \leq}_i\}_{i \in I}$, $St_{\mathcal{A}}(z,w)$
can not contact with $\Delta$. If not, let $x$ be the common
intersection of $\Delta$ and $St_{\mathcal{A}}(z,w)$, then $x \in
H^{\geq 0}_i$ and $x \in H^{\leq 0}_i$, for all the $i \in I$. It
means that $x \in \bigcap_{i \in I} H_i$ is a common intersection of
$H_i$, contradiction with $z$ is unstable with respect to the toric
variety.

Same argument applies for any $\epsilon \in \Theta_{cpt}$, so
$St_{\mathcal{A}}(z,w)$ are not adjacent to any $\Delta_\epsilon$.
While $\Delta_\epsilon$ are all the bounded polytopes and every
unbounded polytope and its subpolytopes must have intersection with
one of them(for example, see Fig \ref{fig:core:nonempty}), thus the
only possibility is $St_{\mathcal{A}}(z,w)$ is empty, i.e. $(z,w)$
is unstable.
\end{proof}

This result is not surprising. By the construction of the open set
$U_\epsilon \cong T^* X(\mathcal{A}_\epsilon)$ from GIT method, we
have in effect

\begin{corollary}\label{cor:dense}
Let $Y(\alpha,0)$ be a toric hyperk{\"a}hler manifold. If
$X(\mathcal{A}_\epsilon)$ is not empty, then its cotangent bundle
$T^*X(\mathcal{A}_\epsilon)$ is contained in $Y(\alpha,0)$ as an
open dense subset.
\end{corollary}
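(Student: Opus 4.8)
The plan is to exhibit an explicit nonempty open subset of $Y(\alpha,0)$ contained in $U_\epsilon\cong T^*X(\mathcal{A}_\epsilon)$ whose complement is a proper analytic subset, so that density follows from connectedness of $Y(\alpha,0)$. Since openness of $U_\epsilon$ is already furnished by Lemma \ref{lem:cotantor}, the only new content is density, i.e.\ that $Y(\alpha,0)\setminus U_\epsilon$ has empty interior.

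First I would describe $U_\epsilon$ through the GIT construction used in the proof of Theorem \ref{thm:corecover}: writing $J=\{i\,|\,\epsilon(i)=1\}$ and $\bar J$ for its complement, the set $U_\epsilon$ is the image in the geometric quotient of those $(z,w)\in\mu_{\mathbb{C}}^{-1}(0)$ for which the point $(z_J,w_{\bar J})\in\mathbb{C}^d$ is $\alpha$-semistable for $X(\mathcal{A}_\epsilon)$; this is precisely $(\mathbb{C}^d)^{\alpha-ss}_\epsilon\times\mathbb{C}^d$ after the orientation-reversing substitution $z_j\leftrightarrow w_j$ for $j\in\bar J$. By Proposition \ref{pro:tor:sta}, $(z_J,w_{\bar J})$ is $\alpha$-semistable exactly when $St_{\mathcal{A}_\epsilon}(z_J,w_{\bar J})\neq\emptyset$.

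The key observation is that whenever the relevant coordinates have full support, i.e.\ $z_i\neq0$ for every $i\in J$ and $w_i\neq0$ for every $i\in\bar J$, the state set collapses to the full polytope, $St_{\mathcal{A}_\epsilon}(z_J,w_{\bar J})=\Delta_\epsilon$, which is nonempty by the hypothesis $X(\mathcal{A}_\epsilon)\neq\emptyset$. Hence every such full-support point is $\alpha$-semistable, so the open set $V=\{[z,w]\,:\,z_i\neq0\ (i\in J),\ w_i\neq0\ (i\in\bar J)\}$ is contained in $U_\epsilon$. Moreover $V$ is nonempty: the zero-section point with $z_i=1,\,w_i=0$ for $i\in J$ and $z_i=0,\,w_i=1$ for $i\in\bar J$ lies in $\mu_{\mathbb{C}}^{-1}(0)$ (since $z_iw_i=0$ for all $i$) and has full support in the relevant coordinates.

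Finally I would argue that $Y(\alpha,0)\setminus V$ has empty interior. Each coordinate $z_i$ (resp.\ $w_i$) transforms by a character of $M_\mathbb{C}$ and therefore descends to a holomorphic section of a line bundle on $Y(\alpha,0)$; its vanishing locus $\{z_i=0\}$ (resp.\ $\{w_i=0\}$) is an analytic hypersurface, and it is proper because the full-support point constructed above avoids it. Since $Y(\alpha,0)$ is connected, each such proper analytic subset has empty interior, and so does the finite union $Y(\alpha,0)\setminus V=\bigcup_{i\in J}\{z_i=0\}\cup\bigcup_{i\in\bar J}\{w_i=0\}$. Therefore $V$, and a fortiori $U_\epsilon$, is dense. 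The point requiring care is the verification that these vanishing loci are genuinely proper analytic subsets of the connected manifold $Y(\alpha,0)$ (equivalently, that $Y(\alpha,0)$ is irreducible, so that any nonempty Zariski-open subset is automatically dense); once this is in hand, the combinatorial input from Proposition \ref{pro:tor:sta} makes the remainder routine.
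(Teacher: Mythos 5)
Your argument is correct and is essentially the paper's own proof: both identify the full-support locus ($z_i\neq0$ for $i\in J$, $w_i\neq0$ for $i\in\bar J$) as automatically semistable once $X(\mathcal{A}_\epsilon)\neq\emptyset$, and deduce density from the fact that its complement is cut out by coordinate vanishing. The only difference is cosmetic: you use the state-set criterion of Proposition \ref{pro:tor:sta} and push density down to $Y(\alpha,0)$ via proper analytic subsets of a connected manifold, whereas the paper uses the numerical criterion (Equation (\ref{eq:sta:epsilon})) and asserts density upstairs in $\mu_{\mathbb{C}}^{-1}(0)^{\alpha-ss}$ before quotienting; your version is if anything slightly more careful at that final descent step.
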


\begin{proof}
Check the semi-stable condition Equation (\ref{eq:sta:epsilon}),
there are only two possibilities. One is the set
$(\mathbb{C}^d)^{\alpha-ss}_\epsilon$ is empty. Otherwise, if there
is some point $(z_J,w_{\bar{J}})$ satisfying the condition, then any
$\{(z_J,w_{\bar{J}})|z_i \neq 0 \ \text{for} \ \epsilon(i)=1 \
\text{and} \ w_i \neq 0 \ \text{for} \ \epsilon(i)=-1\}$ belong to
$(\mathbb{C}^d)^{\alpha-ss}_\epsilon$, which means
$(\mathbb{C}^d)^{\alpha-ss}_\epsilon$ is dense in $\mathbb{C}^d$.
Then the corollary follows from that $\mu_\mathbb{C}^{-1}(0) \bigcap
(\mathbb{C}^d)^{\alpha-ss}_\epsilon \times \mathbb{C}^d$ is dense in
$\mu_{\mathbb{C}}^{-1}(0)^{\alpha-ss}$ and on which the complex
torus $M_\mathbb{C}$ acts effectively.
\end{proof}

Further more, we can verify that the following conjecture holds in
most low dimensional cases
\begin{conjecture}
The complement of each open dense set $U_\epsilon \cong T^*
X(\mathcal{A}_\epsilon)$ in the toric hyperk{\"a}hler manifold
$Y(\alpha,0)$ is of complex codimension $n$ and constituted by
several toric varieties in the extended core of $Y(\alpha,0)$.
\end{conjecture}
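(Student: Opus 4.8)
The plan is to describe the complement $C_\epsilon = Y(\alpha,0)\setminus U_\epsilon$ explicitly and identify it with a union of extended-core components $Z_{\epsilon'}$, each of complex dimension $n$. Using the orientation-reversal biholomorphisms of Section 2 (which carry $Y(\mathcal{A})$ to $Y(\mathcal{A}_\epsilon)$ and permute the charts $U_{\epsilon'}$), I would first reduce to the case $\epsilon=(1,\dots,1)$, so that $\Delta=\Delta_\epsilon$ is bounded and $U\cong T^*X(\mathcal{A})$ is the image of $((\mathbb{C}^d)^{\alpha-ss}\times\mathbb{C}^d)\cap\mu_{\mathbb{C}}^{-1}(0)$. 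By the GIT description, $[z,w]\in C$ precisely when $(z,w)$ is $\alpha$-semi-stable as a hyperk\"ahler point while $z$ fails to be $\alpha$-semi-stable in $\mathbb{C}^d$; by Proposition \ref{pro:tor:sta} and the remark following it, the latter depends only on the zero pattern of $z$, namely $St_{\mathcal{A}}(z)=\bigcap_{z_i\neq 0}H_i^{\geq 0}\cap\bigcap_{z_i=0}H_i=\emptyset$, equivalently $\bigcap_{i:z_i=0}H_i\cap\Delta=\emptyset$. Corollary \ref{cor:dense} already guarantees $C$ is nowhere dense; the point is to pin down its dimension and structure.

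Next I would stratify $Y(\alpha,0)$ by the coordinate zero-patterns $I=\{i:z_i=0\}$, $I'=\{i:w_i=0\}$. On the stratum $Y_{I,I'}$ the relation $\mu_{\mathbb{C}}=0$ reads $\sum_{i\in\bar I\cap\bar{I'}}z_iw_i\iota^*e_i^*=0$, so it is vacuous exactly on the extended core ($\bar I\cap\bar{I'}=\emptyset$) and otherwise cuts the dimension. A routine count gives $\dim_{\mathbb{C}}Y_{I,I'}=n$ for the extended-core components $Z_{\epsilon'}$ (where $\mu_{\mathbb{C}}=0$ imposes nothing and one only quotients by $M_{\mathbb{C}}$) and $\dim_{\mathbb{C}}Y_{I,I'}=2n-|I|-|I'|$ off the core, so the only candidate top strata away from the core are those with $|I|+|I'|=n$. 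The heart of the argument, and what I expect to be the main obstacle, is the following claim: \emph{if $(z,w)$ is hyperk\"ahler $\alpha$-semi-stable and $z$ is not $\alpha$-semi-stable, then $z_iw_i=0$ for all $i$}, i.e. $C$ is contained in the extended core.

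To prove the claim I would combine the two semi-stability inequalities with the complex moment relation. Since $z$ is not $\alpha$-semi-stable, $\alpha$ lies outside the closed cone $C_P=\sum_{i:z_i\neq0}\mathbb{R}_{\geq0}\iota^*e_i^*$, so there is a separating $v\in\mathfrak{m}$ with $\langle v,\alpha\rangle<0$ and $\langle v,\iota^*e_i^*\rangle\geq0$ for every $i$ with $z_i\neq0$; feeding the hyperk\"ahler semi-stability expression $\alpha\in C_P+\sum_{i:w_i\neq0}\mathbb{R}_{\geq0}(-\iota^*e_i^*)$ into $v$ forces some $i$ with $w_i\neq 0$ and $\langle v,\iota^*e_i^*\rangle>0$. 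The remaining task is to show that the complex relation $\sum_{i:z_i\neq0,\,w_i\neq0}z_iw_i\iota^*e_i^*=0$ is incompatible with having $z_iw_i\neq0$ for some $i$; in low dimension (e.g. Example \ref{ex:1dim:arr}) one checks directly that $\mu_{\mathbb{C}}=0$ forces the offending $w_i$ to vanish, which is exactly why the conjecture holds there, but a uniform convex/linear-algebra argument reconciling the real separating functional $v$ with the complex annihilation relation is the gap I do not see how to close in general, since the index $i$ produced above may well lie in the support of $z$.

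Granting the claim, $C$ lies in the extended core $\bigcup_{\epsilon'}Z_{\epsilon'}$, and since $C=Y\setminus U$ is closed and $N$-invariant while each $Z_{\epsilon'}\cong X(\mathcal{A}_{\epsilon'})$ is irreducible of dimension $n$, the intersection $C\cap Z_{\epsilon'}$ is either all of $Z_{\epsilon'}$ (when its dense torus orbit is $z$-unstable, i.e. $\bigcap_{\epsilon'(i)=-1}H_i\cap\Delta=\emptyset$) or a proper, lower-dimensional toric subvariety shared with an adjacent component. Setting $S=\{\epsilon':Z_{\epsilon'}\subseteq C\}$ I would conclude $C=\bigcup_{\epsilon'\in S}Z_{\epsilon'}$, a union of toric varieties in the extended core of pure complex dimension $n$, i.e. of complex codimension $n$ in $Y(\alpha,0)$, as predicted. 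The final bookkeeping is to verify, via the arrangement combinatorics already used in the proof of Theorem \ref{thm:corecover}, that the lower-dimensional overlaps are genuinely absorbed into the full components, so that $C$ is pure-dimensional.
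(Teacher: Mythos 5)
There is no proof in the paper to compare against: this statement is stated and left as a \emph{conjecture}, and the authors explicitly say that the method of the main theorem ``is not effective in this problem'' because the non-adjacency of $St_{\mathcal{A}}(z,w)$ to a fixed $\Delta_\epsilon$ is strictly weaker than emptiness of $St_{\mathcal{A}}(z,w)$, and that one would hope to settle it by ``deep combinatorial theory.'' So the relevant question is only whether your proposal closes the gap the authors could not, and it does not: you say so yourself. Your reduction to $\epsilon=(1,\dots,1)$ via the orientation-reversing maps $(z_j,w_j)\mapsto(w_j,-z_j)$ is fine, the stratification by zero-patterns and the dimension count $\dim_{\mathbb{C}}Y_{I,I'}=2n-|I|-|I'|$ off the extended core are fine, and the endgame (irreducibility of the $Z_{\epsilon'}$ forces $C$ to be a union of full components) is plausible. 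But everything funnels through the single claim that a hyperk\"ahler $\alpha$-semi-stable point $(z,w)\in\mu_{\mathbb{C}}^{-1}(0)$ whose $z$-part is not toric $\alpha$-semi-stable must lie in the extended core, and that claim is exactly where you stop.

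The reason your separating-functional argument cannot close it as stated: the functional $v\in\mathfrak{m}$ with $\langle v,\alpha\rangle<0$ and $\langle v,\iota^*e_i^*\rangle\geq 0$ for $z_i\neq 0$ constrains the \emph{signs} of real coefficients in the cone membership (\ref{eq:hyp:sta}), whereas the complex relation $\sum_{i}z_iw_i\,\iota^*e_i^*=0$ involves complex coefficients $z_iw_i$ carrying no sign information, so pairing with $v$ yields nothing. What the relation does give is purely matroid-theoretic: the set $S=\{i: z_iw_i\neq 0\}$ must support a linear relation among $\{\iota^*e_i^*\}_{i\in S}$, i.e. contain a circuit of the vector configuration $\{\iota^*e_i^*\}$, equivalently the complementary set of normals $\{u_i\}_{i\in\bar S}$ fails to span $\mathfrak{n}$ in a controlled way. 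Any actual proof has to play this circuit condition off against the regularity of $\alpha$ (smoothness of $\mathcal{A}$) and the semi-stability cone; this is precisely the combinatorial content the authors flag as missing, and it is visible already in their Example \ref{ex:1dim:arr}, where $\mu_{\mathbb{C}}=0$ forces either all $z_iw_i=0$ or all $z_iw_i\neq 0$ for reasons special to that circuit structure. One further caution if you pursue this: in degenerate cases such as $d=2$, $n=1$ (where $Y(\alpha,0)=T^*\mathbb{C}P^1=U_{(1,1)}$) the complement is empty, so any general statement you prove must either accommodate an empty complement or carry a hypothesis excluding such cases; as literally stated the conjecture needs that caveat too.
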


The method developed in the proof of the main theorem is not
effective in this problem probably because that the state set
$St_{\mathcal{A}}(z,w)$ is not adjacent to some $\Delta_\epsilon$ is
much weaker than $St_{\mathcal{A}}(z,w)$ is empty itself. So it is
hopeful utilizing deep combinatorial theory to prove this
conjecture.

\bibliographystyle{alpha}
\bibliography{hkBib}

\newpage

\noindent Craig van Coevering: craigvan@ustc.edu.cn

\

\noindent Wei Zhang: zhangw81@ustc.edu.cn

\

\

\noindent School of Mathematical Sciences

\noindent University of Science and Technology of China

\noindent Hefei, 230026, P. R.China

\end{document}